\newtheorem{theorem}[subsubsection]{Theorem}
\newtheorem{proposition}[subsubsection]{Proposition}
\newtheorem{definition}[subsubsection]{Definition}
\newtheorem{corollary}[subsubsection]{Corollary}
\newtheorem{lemma}[subsubsection]{Lemma}
\def\al{\alpha}
\def\be{\beta}
\def\de{\delta}
\def\ep{\epsilon}
\def\sg{\sigma}
\def\pa{\partial}
\def\gm{\gamma}
\def\lra{\longrightarrow}
\def\rra{\rightarrow}
\def\lbr{\linebreak}
\def\otm{\otimes}
\def\ol{\overline}
\def\wt{\widetilde}
\def\uns{\underset}
\def\ovs{\overset}
\def\vthe{\vartheta}
\def\Im{\operatorname{Im}}
\def\Ker{\operatorname{Ker}}
\def\Coker{\operatorname{Coker}}
\def\Vec{\operatorname{\bf Vect}}
\def\Alg{\operatorname{\bf Alg}}
\def\XAlg{\operatorname{\bf {\mathcal X}Alg}}
\def\Tot{\operatorname{Tot}}
\def\XLie{\operatorname{{\bf {\mathcal{X}Lie}}}}
\def\Lie{\operatorname{{\bf Lie}}}
\def\XVec{\operatorname{{\bf {\mathcal{X}Vect}}}}
\begin{document}

\title{Cyclic Homologies of Crossed Modules of Algebras}
\author[G. Donadze]{Guram Donadze}
\address{\small \rm Guram Donadze: Department of Algebra\\A.Razmadze Mathematical Institute\\M.Alexidze St. 1, 0193 Tbilisi\\Georgia}
\email{donad@rmi.acnet.ge}
\author[N. Inassaridze]{Nick Inassaridze}
\address{\small \rm Nick Inassaridz: Department of Algebra\\A.Razmadze Mathematical Institute\\M.Alexidze St. 1, 0193 Tbilisi\\Georgia}
\curraddr{Departamento de Matem\'atica Aplicada I\\Universidad de
Vigo, EUIT Forestal\\36005 Pontevedra\\Spain}
\email{niko.inas@gmail.com}
\author[E. Khmaladze]{Emzar Khmaladze}
\address{\small \rm  Emzar Khmaladze: Department of Algebra\\A.Razmadze Mathematical Institute\\M.Alexidze St. 1, 0193 Tbilisi\\Georgia}
\email{e.khmal@gmail.com}
\author[M. Ladra]{Manuel Ladra}
\address{\small \rm  Manuel Ladra: Departamento de \'Algebra, Facultad de Matem\'aticas\\Universidad de Santiago de Compostela\\15782
Santiago de Compostela\\Spain} \email{manuel.ladra@usc.es}

\subjclass{17B40, 17B56, 18G10, 18G50, 18G60, 19D55}
\keywords{cyclic homology, non-abelian derived functors, simplicial
algebra}
\thanks{The first three authors would like to thank the {\em University of
Santiago de Compostela} for its hospitality during the work on this
paper. They were also partially supported by GNSF/ST06/3-004. The
authors were partially supported by INTAS grant 06-1000017-8609. The
second and the forth authors were supported by MEC grant MTM
2006-15338-C02 (European FEDER support included) and by Xunta de
Galicia grant PGIDITI06PXIB371128PR.}

%%==============================================================================================================

\begin{abstract}
The Hochschild and (cotriple) cyclic homologies of crossed modules
of (not-necessarily-unital) associative algebras are investigated.
Wodzicki's excision theorem is extended for inclusion crossed
modules in the category of crossed modules of algebras. The cyclic
and cotriple cyclic homologies of crossed modules are compared in
terms of long exact homology sequence, generalising the relative
cyclic homology exact sequence.
\end{abstract}

\maketitle

%\tableofcontents

\

%%Section 1
\section{Introduction}\label{gen}

The present paper is concerned with the Hochschild and cyclic
homology theories of crossed modules of associative algebras, or
equivalently, of simplicial associative algebras with the associated
Moore complex of length $1$.

The study of (co)homological properties of similar objects in the
category of groups has been the subject of several papers, for
instance, the works of Baues \cite{Bau} and Ellis \cite{El5}
investigating the (co)homology of crossed modules of groups as the
(co)homology of their classifying spaces; the work of Carrasco,
Cegarra and Grandje\'an \cite{CCG} making the observation to the
same subject but in the cotriple (co)homology point of view; and
\cite{GLP} giving a connection between the cotriple homology of
crossed modules and the homology of their classifying spaces.

Crossed modules of groups were introduced by Whitehead in the late
1940s as algebraic models for path-connected CW-spaces whose
homotopy groups are trivial in dimensions $> 2$ \cite{Wh1}. Since
their introduction crossed modules have played an important role in
homotopy theory. For illustration we mention various classification
problems for low-dimensional homotopy types and derivation of van
Kampen theorem generalisations (see the survey of Brown \cite{Br}).

Crossed modules of Lie and associative algebras have also been
investigated by various authors. Namely, in the works of Dedecker
and Lue \cite{DL, L} crossed modules of associative algebras have
played a central role in what must be coefficients in
low-dimensional non-abelian cohomology. In \cite{BM} Baues and
Minian have shown that crossed modules of associative algebras can
be used to represent the Hochschild cohomology. In \cite{KL} Kassel
and Loday have used crossed modules of Lie algebras to characterize
the third Chevalley-Eilenberg cohomology of Lie algebras.

The aim of this paper is to construct and study the cotriple cyclic
homology of crossed modules of (non-unital) associative algebras,
generalising the classical cyclic homology of associative algebras
in zero characteristic case, and to compare it with the cyclic
homology of their nerves in terms of long exact homology sequence.

\subsection{Organisation}\label{gen}
After the introductory Section 1, the paper is organized in four
sections. Section 2 is devoted to recalling some necessary
definitions and results about crossed modules of associative
algebras and the Hochschild and cyclic homologies of simplicial
associative algebras. We begin Section 3 by examining the Hochschild
and cyclic homologies of aspherical augmented simplicial algebras
(Proposition \ref{prop_1} and Corollary \ref{cor_x}). Then we give
the five-term exact sequences relating the Hochschild and cyclic
homologies of crossed modules and algebras in low dimensions
(Theorem \ref{five}). Finally in this section, we investigate the
excision problem for Hochschild (resp. cyclic) homology of inclusion
crossed modules of algebras (Theorem \ref{exc1}). In Section 4 we
construct and study the cotriple cyclic homology theory in the
category of crossed modules of associative algebras. Then, we
calculate the cotriple cyclic homology of inclusion crossed modules
as the relative cyclic homology (Proposition \ref{relat}). In
Section 5 we compare the cyclic and cotriple cyclic homology
theories of crossed modules of associative algebras in terms of long
exact homology sequence (Theorem \ref{conection}).

\subsection{Notations and Conventions}\label{gen}
We fix $\mathbf k$ as a ground field. We make no assumptions on the
characteristic of $\mathbf k$, except as stated. All tensor products
are over $\mathbf k$. Moreover, $A^{\otm n}= A\otm \cdots \otm A$,
$n$ factors. Vectorspaces are considered over $\mathbf k$ and their
category is denoted by $\Vec$, while ${\mathfrak C}_{\ge 0}$ is the
category of non-negatively graded complexes of vectorspaces.
Algebras are (non-unital) associative algebras over $\mathbf k$ and
their category is denoted by $\Alg$. The term {\em free algebra}
means a free (non-unital) algebra over some vectorspace. Ideals are
always two-sided.

For any functor $T:{\underline C}\to \Vec$, where ${\underline C}$
coincides with the category $\Alg$ or the category of crossed
modules of algebras, and for any simplicial object $C_*$ in
${\underline C}$ denote by $T(C_*)$ the simplicial vectorspace
obtained by applying the functor $T$ dimension-wise to $C_*$.

\

%%Section 2
\section{Preliminaries}

%%Subsection 2.1
\subsection{Action of algebras and semidirect product}\label{subs_1}
Let $A$ and $R$ be two algebras. By an \emph{action} of $A$ on $R$
we mean an $A$-bimodule structure on $R$ satisfying the following
conditions:
$$
 a (r r')=(a r) r',\quad (r a) r'= r (a r'), \quad (r r') a = r (r' a)
$$
for all $a\in A$, $r, r'\in R$. For example, if $R$ is an ideal of
the algebra $A$, then the multiplication in $A$ yields an action of
$A$ on $R$.

Given an algebra action of $A$ on $R$, denote by $[A,R]$ the
sub-vectorspace of $R$ generated by the elements $[a,r]=ar - ra$ for
$r\in R$, $a\in A$. Moreover, one can form \emph{the semidirect
product} algebra, $R\rtimes A$, with the underlying vectorspace
$R\oplus A$ endowed with the multiplication given by
\begin{align*}
&(r, a)(r', a')=(r r'+ a r' + r a', a a')
\end{align*}
for $(r, a),(r', a')\in R\rtimes A$.

%%Subsection 2.2
\subsection{Crossed module and its nerve}
Now we recall the basic notions about crossed modules of algebras
(cf. \cite{DIL} and \cite{El1}).

A \emph{crossed module} $(R, A,\rho)$ of algebras is an algebra
homomorphism $\rho:R\rra A$, together with an action of $A$ on $R$,
such that the following conditions hold:
\begin{align*}
&\rho(a r) = a \rho(r), \quad \rho(r a)=\rho(r) a,\\
&\rho(r) r' = r r' = r \rho(r') \qquad \qquad \qquad \qquad \qquad \qquad \qquad \qquad \text{(Peiffer identity)}
\end{align*}
for all $a\in A$, $r, r'\in R$. We point out that the image of
$\rho$ is necessarily an ideal of $A$, and that $\ker \rho$,
contained in the two-sided annihilator of $R$, is an $A/\rho
R$-bimodule.

The concept of a crossed module of algebras generalizes the concepts
of an ideal as well as a bimodule. In fact, a common instance of a
crossed module of  algebras is that of an algebra $A$ possessing an
ideal $I$; the inclusion homomorphism $I\hookrightarrow A$ is a
crossed module with $A$ acting on $I$ by the multiplication in $A$,
called \emph{the inclusion crossed module} of algebras.

Another common instance is that of an $A$-bimodule $M$ with trivial
multiplication; then the zero homomorphism $0:M\to A$, $m\mapsto 0$,
is a crossed module.

Any epimorphism of algebras $R\twoheadrightarrow A$ with the kernel
in the two-sided annihilator of $R$ is a crossed module, with $a\in
A$ acting on $r\in R$ by $a r=\tilde{r} r$ and $r a = r \tilde{r}$,
where $\tilde{r}$ is any element in the preimage of $a$.

A \emph{morphism} $(\mu,\nu):(R,A,\rho)\to(R',A',\rho')$ of crossed
modules is a commutative square of algebras
$$
\xymatrix{
R \ar[r]^{\mu}\ar[d]_\rho& R' \ar[d]^{\rho'}\\
A \ar[r]^{\nu}& A'}
$$
such that $\mu(a r)=\nu(a)\mu(r)$ and $\mu(r a)=\mu(r)\nu(a)$ for
$a\in A$, $r\in R$. Let us denote the category of crossed modules of
algebras by $\XAlg$.

Given a crossed module $(R,A,\rho)$ of algebras, consider the
semidirect product algebra, $R\rtimes A$. There are algebra
homomorphisms ${\mathfrak s}:R\rtimes A\to A$, $(r, a)\mapsto a$ and
${\mathfrak t}:R\rtimes A\to A$,  $(r, a)\mapsto \rho(r) + a$ and
binary operation $(r', a')\circ (r, a) = (r + r', a)$ for all pairs
$(r, a),(r', a')\in R\rtimes A$ such that $\rho(r) + a = a'$. This
composition $\circ$ with the \emph{source} map ${\mathfrak s}$ and
\emph{target} map ${\mathfrak t}$ constitutes an internal category
in the category $\Alg$ and the nerve of its category structure forms
the simplicial algebra $\mathfrak{N}_*(R,A,\rho)$, where
$\mathfrak{N}_n(R,A,\rho)=R\rtimes (\cdots (R\rtimes A)\cdots)$ with
$n$ semidirect factors of $R$, and face and degeneracy homomorphisms
are defined by
\begin{align*}
&d_0(r_1, \ldots, r_n, a) = (r_2, \ldots, r_n, a),\\
&d_i(r_1, \ldots, r_n, a) = (r_1, \ldots, r_i + r_{i+1}, \ldots, r_n, a),\qquad 0<i<n,\\
&d_n(r_1, \ldots, r_n, a) = (r_1, \ldots, r_{n-1}, \rho(r_n) + a),\\
&s_i(r_1, \ldots, r_n, a) = (r_1, \ldots, r_i, 0, r_{i+1}, \ldots, r_n, a),\qquad 0\le i\le n.
\end{align*}
The simplicial algebra $\mathfrak{N}_*(R,A,\rho)$ is called
\emph{the nerve of the crossed module} $(R,A,\rho)$.

%%Subsection 2.3
\subsection{Homologies of simplicial algebras}\label{subsec_2.3}
Let us recall that for a given simplicial algebra $A_*$ its Moore
Normalisation is a complex of algebras ${\mathcal N} A_*$, where
$$
{\mathcal N}_nA_*=\ovs{n-1}{\uns{i=0}{\bigcap}} \Ker d^n_i\quad\text{and}\quad
\pa_n=d^n_n|_{\mathcal{N}_nA_*}.
$$
Note that the Moore complex of the nerve of a crossed module of
algebras $(R,A,\rho)$ is trivial in dimensions $\ge 2$ and is just
the original crossed module up to isomorphism with $R$ in dimension
$1$ and $A$ in dimension $0$.

The $n$-th homotopy of the simplicial algebra is defined as
$\pi_n(A_*)=\Ker\pa_n/\Im\pa_{n+1}$. Moreover, if it is given an
augmented simplicial algebra $(A_*,d^0_0, A)$, we calculate the
extended homotopy groups as $\pi_0(A_*,d^0_0, A)=\Ker
d^0_0/\Im\pa_1$ and $\pi_{-1}(A_*,d^0_0, A)=A/\Im d^0_0$. We say
that the augmented simplicial algebra $(A_*,d^0_0, A)$ is aspherical
if $\pi_n(A_*,d^0_0, A)=0$ for all $n\ge -1$. It is well-known that
in any homotopy group $\pi_n$, $n\ge 1$, the multiplication induced
by that of $A_n$ vanishes.

Given an algebra $A$, the standard bar, $C^{bar}(A)$, and
Hochschild, $C(A)$, complexes have the form
$$
C^{bar}_n(A)=C_n(A):=A^{\otm (n+1)},
$$
where the boundary operator of the bar complex is given by
$$
b'(a_0\otm \cdots \otm a_n)=\ovs{n-1}{\uns{i=0}{\sum}}(-1)^i(a_0\otm
\cdots \otm a_ia_{i+1}\otm \cdots \otm a_n),
$$
while the Hochschild boundary is given by
$$
b(a_0\otm \cdots \otm a_n)= b'(a_0\otm \cdots \otm a_n) + (-1)^n
(a_na_0\otm a_1\otm \cdots \otm a_{n-1}).
$$
Consider the \emph{cyclic}, first quadrant bicomplex:
\begin{equation}\label{cyc_bicom}
\CD
  & & @VbVV @V-b'VV @VbVV \\
  & & A^{\otm 2} @<{1-t}<< A^{\otm 2} @<N<< A^{\otm 2} @<{1-t}<< \\
  & & @VbVV @V-b'VV @VbVV \\
  & & A @<{1-t}<< A @<N<< A @<{1-t}<< ,
\endCD
\end{equation}
where $t:A^{\otm (n+1)}\to A^{\otm (n+1)}$, $n\ge 0$ is the cyclic
operator given by $t(a_0,\ldots,a_n)=(-1)^n(a_n,a_0,\ldots,a_{n-1})$
and $N:A^{\otm (n+1)}\to A^{\otm (n+1)}$ is the operator defined by
$N=1+t+t^2+ \cdots +t^n$. We denote by $CC(A)$ and $CC^{\{2\}}(A)$
the total complexes of the bicomplexes (\ref{cyc_bicom}) and that of
obtained through deleting all columns whose numbers are $\ge 2$ in
(\ref{cyc_bicom}), respectively.

Now suppose that we are given a functorial chain complex $\Phi(A)$,
as in the case of $C(A)$, $C^{bar}(A)$, $CC^{\{2\}}(A)$ and $CC(A)$
complexes, and set $H_n^{\Phi}(A)=H_n(\Phi(A))$, $n\ge 0$. Then,
extending these homology to simplicial algebras in a usual way, for
a given simplicial algebra $A_*$, denote by $\Phi(A_*)$ the
following bicomplex:
\begin{align*}
\CD
  & & @Vd_2VV @V-d_2VV @Vd_2VV \\
  & & \Phi_1(A_0) @<{}<< \Phi_1(A_1) @<<< \Phi_1(A_2) @<{}<< \\
  & & @Vd_1VV @V-d_1VV @Vd_1VV \\
  & & \Phi_0(A_0) @<{}<<\Phi_0(A_1) @<<< \Phi_0(A_2) @<{}<< ,
\endCD
\end{align*}
where horizontal differentials are obtained by taking alternating
sums, and by $H_n^{\Phi}(A_*)$ the $n$-th homology of its total
complex $\Tot(\Phi(A_*))$ (e.g. see \cite{GeWe}).

Given a crossed module of algebras $(R,A,\rho)$, denote by
$\Phi(R,A,\rho)$ the total complex
$\Tot(\Phi(\mathfrak{N}_*(R,A,\rho)))$. Then the \emph{Hochschild,
bar, naive Hochschild} and \emph{cyclic} homology of the crossed
module $(R,A,\rho)$ are defined by
\begin{align*}
&HH_n(R,A,\rho)=H_n(CC^{\{2\}}(R,A,\rho)),\qquad H^{bar}_n(R,A,\rho)=H_n(C^{bar}(R,A,\rho)),\\
&HH^{naive}_n(R,A,\rho)=H_n(C(R,A,\rho))\quad \text{and}\quad HC_n(R,A,\rho)=H_n(CC(R,A,\rho)),
\end{align*}
respectively, for $n\ge 0$.

%%Subsection 2.4
\subsection{Linearly split extension}
Let $(R,A,\rho)$, $(S,B,\sigma)$ and $(T,C,\theta)$ be crossed
modules of algebras and
\begin{equation}\label{formula_2}
\xymatrix {
0 \ar[r] & R \ar[d]_{\rho}\ar^{\mu}[r] & S \ar^{\sigma}[d]\ar^{\mu'}[r] & T \ar^{\theta}[d]\ar[r] & 0\\
0 \ar[r] & A \ar^{\nu}[r] & B \ar^{\nu'}[r] & C \ar[r] & 0,
}
\end{equation}
a sequence of crossed modules. (\ref{formula_2}) is called an
extension of crossed modules if both of its rows are exact sequences
of algebras. An extension (2) of crossed modules of algebras is
called a linearly split extension if there exists a pair of linear
maps $\gm:T\to S$ and $\de:C\to B$ such that $\mu'\gm=1_T$,
$\nu'\de=1_C$ and $\sg\gm=\de\theta$.

\

%%Section 3
\section{Hochschild and Cyclic Homology of Crossed Modules}\label{tres}

%%Subsection 3.1
\subsection{Homologies of aspherical augmented simplicial algebras}
We begin this section with a few results about homology of
aspherical augmented simplicial algebras which we shall need in the
sequel.

%%Proposition 3.1.1
\begin{proposition}\label{prop_1}
Let $(A_*,d^0_0, A)$ be an aspherical augmented simplicial algebra
and $\Phi:\Alg\to {\mathfrak C}_{\ge 0}$ be a covariant functor,
that for each $n\ge 0$, there is a functor $\wt\Phi_n:\Vec\to \Vec$
such that the diagram
\begin{equation*}
\xymatrix {
\Alg\ar[d]_{\mathcal U} \ar@<1mm>[rr]^{\Phi_n} & & \Vec \\
\Vec\ar@<1mm>[urr]^{\wt\Phi_n}
}
\end{equation*}
commutes, where $\mathcal U$ is the forgetful functor from the
category $\Alg$ to the category $\Vec$. Then
\begin{enumerate}
\item[(i)] the augmented simplicial vectorspace
$$
(\Phi_n(A_*),\Phi_n(d^0_0) ,\Phi_n(A))
$$
is acyclic for any $n\ge 0$;
\item[(ii)] there is a natural isomorphism
$$
H^{\Phi}_n(A) \cong H^{\Phi}_n(A_*),\quad n\ge 0\;.
$$
\end{enumerate}
\end{proposition}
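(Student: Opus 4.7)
The plan is to reduce both claims to the observation that in $\Vec$ an aspherical augmented simplicial vectorspace admits extra degeneracies, a piece of combinatorial data preserved by every functor $\Vec\to\Vec$.

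For (i) I would begin by noting that the extended homotopy groups $\pi_n(A_*,d^0_0,A)$ are defined purely in terms of kernels, images, and $\mathbf k$-linear maps, and therefore depend only on the underlying augmented simplicial vectorspace $\mathcal{U}(A_*,d^0_0,A)$; in particular this vectorspace is itself aspherical. Since every short exact sequence of vectorspaces splits, asphericity can be upgraded to the existence of a contraction, i.e. of linear maps $s_{-1}\colon\mathcal{U}(A_k)\to\mathcal{U}(A_{k+1})$ for $k\ge -1$ satisfying the standard extra-degeneracy identities $d_0s_{-1}=1$, $d_{i+1}s_{-1}=s_{-1}d_i$, and $s_{i+1}s_{-1}=s_{-1}s_i$, constructed by induction from repeated splittings. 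These identities are equations between compositions of morphisms and are therefore preserved by any functor. Applying $\wt\Phi_n$ dimension-wise endows $(\wt\Phi_n\mathcal{U}(A_*),\wt\Phi_n\mathcal{U}(A))$ with extra degeneracies and hence makes it acyclic; the equality $\Phi_n=\wt\Phi_n\circ\mathcal{U}$ identifies this acyclic object with $(\Phi_n(A_*),\Phi_n(d^0_0),\Phi_n(A))$, which is (i).

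For (ii) I would form the augmented bicomplex $E$ obtained from the bicomplex $\Phi(A_*)$ by prepending a column $p=-1$ equal to $\Phi(A)$, with horizontal augmentation arrows $\Phi_q(d^0_0)\colon\Phi_q(A_0)\to\Phi_q(A)$. By (i) each row of $E$ (for fixed $q$) is the chain complex associated with an aspherical augmented simplicial vectorspace, hence is acyclic. A standard spectral-sequence argument (filter by columns and take horizontal homology first, which already vanishes at $E^1$) then yields that $\Tot(E)$ is acyclic. The column $p=-1$ forms a subcomplex of $\Tot(E)$ whose quotient is precisely $\Tot(\Phi(A_*))$; the resulting short exact sequence of complexes, combined with the acyclicity of $\Tot(E)$, gives, via its long exact sequence, connecting isomorphisms
$$
H_n^{\Phi}(A_*)\stackrel{\cong}{\lra}H_n^{\Phi}(A),\qquad n\ge 0,
$$
and naturality is evident from the construction.

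The main obstacle is (i). Since $\wt\Phi_n$ is only a functor --- not assumed additive, let alone exact --- one cannot transport acyclicity of the augmented Moore complex through $\wt\Phi_n$ directly. The crucial move is to refine the acyclicity witness in $\Vec$ into a simplicial contraction, whose defining data are purely equations between compositions of arrows and therefore survive any functorial passage; once this is in hand, part (ii) is a routine acyclic-assembly argument.
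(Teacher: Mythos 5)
Your argument is correct and follows essentially the same route as the paper: part (i) is exactly the paper's observation that an acyclic augmented simplicial vectorspace admits a linear contraction (extra degeneracies), a purely equational structure that survives the arbitrary functor $\wt\Phi_n$ and collapses to $\Phi_n$ via $\Phi_n=\wt\Phi_n\circ\mathcal U$, and part (ii) is the paper's bicomplex spectral sequence argument, which you merely repackage as acyclicity of the augmented total complex followed by the long exact sequence of the column subcomplex. No gaps.
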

\begin{proof} {\bf (i)} Straightforward from the fact that an acyclic augmented simplicial vectorspace
$({\mathcal U}(A_*), {\mathcal U}(d^0_0), {\mathcal U}(A))$ poses a
linear left (right) contraction.

\noindent{\bf (ii)} Let us consider the bicomplex $\Phi(A_*)$. Using (i), for any fixed $q$ the (horizontal) homology of the bicomplex
$\Phi(A_*)$ is $H_p(\Phi_q(A_*))=0$, $p> 0$ and $H_0(\Phi_q(A_*))=\Phi_q(A)$. Now the bicomplex spectral sequence argument completes the proof.
\end{proof}

%%Corollary 3.1.2
\begin{corollary}\label{cor_x}
Let $(A_*,d^0_0,A)$ be an aspherical augmented simplicial algebra.
Then there are natural isomorphisms
\begin{align*}
&HH_n(A_*)\cong HH_n(A),\qquad \qquad \qquad HH^{bar}_n(A_*)\cong HH^{bar}_n(A),\\
&HH^{naive}_n(A_*)\cong HH^{naive}_n(A)\quad \text{and}\quad HC_n(A_*)\cong HC_n(A)
\end{align*}
for any $n\ge 0$.
\end{corollary}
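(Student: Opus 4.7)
The plan is to deduce all four isomorphisms as instances of Proposition \ref{prop_1}(ii), applied successively with $\Phi$ equal to each of $C$, $C^{bar}$, $CC^{\{2\}}$ and $CC$. The only non-trivial content is to verify, for each such $\Phi$, the hypothesis of the proposition: namely, that in every fixed complex degree $n$ the functor $\Phi_n:\Alg\to\Vec$ factors through the forgetful functor $\mathcal U:\Alg\to\Vec$.

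First I would treat the Hochschild and bar cases. Here the vectorspace in degree $n$ is $C_n(A)=C^{bar}_n(A)=A^{\otm (n+1)}$, which depends only on the underlying vectorspace of $A$; one takes $\wt{C}_n=\wt{C}^{bar}_n=(-)^{\otm (n+1)}:\Vec\to\Vec$. The algebra structure enters only through the differentials $b$ and $b'$, not through the underlying vectorspaces, which is exactly the form of factorisation that Proposition \ref{prop_1} requires.

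Next, for $\Phi=CC$ and $\Phi=CC^{\{2\}}$, the vectorspace in total degree $n$ of the relevant total complex is a finite direct sum of tensor powers $A^{\otm (q+1)}$, indexed by the (at most two, in the truncated case) columns of the bicomplex (\ref{cyc_bicom}) contributing to total degree $n$. In both cases this is again a finite direct sum of tensor powers of the underlying vectorspace of $A$, so one can take $\wt{\Phi}_n$ to be the corresponding finite direct sum of tensor-power functors on $\Vec$. Once more, the algebra structure intervenes only through the vertical maps ($b$, $-b'$) and horizontal maps ($1-t$, $N$), not through the terms themselves.

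With the factorisation hypothesis verified in all four cases, Proposition \ref{prop_1}(ii) immediately yields the claimed natural isomorphisms. No genuine obstacle arises: the content of the corollary is already contained in the proposition, and the work reduces to the observation that each of the classical complexes is manifestly a functor of the underlying vectorspace in every fixed degree. The mildest subtlety is that in the cyclic cases one is really applying Proposition \ref{prop_1} to the total complex of a bicomplex, but this is handled uniformly by the same argument applied column by column.
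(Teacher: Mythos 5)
Your proposal is correct and is essentially the paper's own argument: the paper likewise notes that $C_n$, $C^{bar}_n$, $CC^{\{2\}}_n$ and $CC_n$ depend in each degree only on the underlying vectorspace of $A$ and then invokes Proposition \ref{prop_1}(ii). Your extra detail on the total-complex terms being finite direct sums of tensor powers is a correct elaboration of what the paper leaves as ``clear.''
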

\begin{proof}
It is clear that values of the functors $C_n$, $C^{bar}_n$,
$CC^{\{2\}}_n$ and $CC_n$, $n\ge 0$, on the algebra $A$ depend only
on the vectorspace underlying $A$. Thanks to Proposition
\ref{prop_1} (ii) the proof is completed.
\end{proof}

Note that if we are given an inclusion crossed module of algebras
$R\hookrightarrow A$, then $\mathfrak{N}_*(R\hookrightarrow A)$ is
an aspherical augmented simplicial algebra and by Corollary
\ref{cor_x} any its homology theory mentioned above coincides with
the respective homology of the quotient algebra $A/R$.

%%Subsection 3.2
\subsection{Connes' Periodicity Exact Sequences}
The title of this subsection refers to the Connes' exact sequence
extended for crossed modules of algebras, connecting their
Hochschild and cyclic homologies in terms of long exact periodic
sequence. Namely, we have the following.

%%Proposition 3.2.1
\begin{proposition}\label{prop_period}
Let $(R,A,\rho)$ be a crossed module of algebras. There is a natural
long exact sequence
\begin{align*}
\cdots \rra HH_n(R,A,\rho) \ovs{I}\rra HC_n(R,A,\rho) \ovs{S}\rra
HC_{n-2}(R,A,\rho) \ovs{B}\rra HH_{n-1}(R,A,\rho) \rra \cdots
\end{align*}
\end{proposition}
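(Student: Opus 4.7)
The plan is to lift the classical Connes periodicity sequence from the algebra level to the simplicial algebra $\mathfrak{N}_*(R,A,\rho)$, exploiting the functoriality of the cyclic bicomplex and the exactness of totalisation.

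The key input at the algebra level is the short exact sequence of bicomplexes
\[
0 \lra CC^{\{2\}}(B) \lra CC(B) \lra \wt{CC}(B) \lra 0,
\]
where $CC^{\{2\}}(B)$ consists of the first two columns of (\ref{cyc_bicom}) and $\wt{CC}(B)$ is the quotient bicomplex obtained by deleting these two columns. Up to a horizontal shift by two, $\wt{CC}(B)$ coincides with $CC(B)$ itself, so the total complex of $\wt{CC}(B)$ has $n$-th homology equal to $HC_{n-2}(B)$. This sequence is $\mathbf k$-split in every bidegree, and its totalised form is the source of the classical Connes periodicity sequence for the algebra $B$.

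Applying this construction dimension-wise to the nerve $\mathfrak{N}_*(R,A,\rho)$, I obtain a short exact sequence of bicomplexes of vectorspaces, one direction coming from the simplicial index of $\mathfrak{N}_*$ and the other from the chain direction of the totalised cyclic complexes. Exactness is preserved in every bidegree because the splitting at the algebra level is already at the level of vectorspaces and hence survives any additive functor. Since totalisation of first-quadrant bicomplexes is an exact operation on short exact sequences (finite direct sums of exact sequences are exact), there results a short exact sequence of chain complexes
\[
0 \lra CC^{\{2\}}(R,A,\rho) \lra CC(R,A,\rho) \lra \wt{CC}(R,A,\rho) \lra 0.
\]
The associated long exact sequence in homology, combined with the identifications $H_n(CC^{\{2\}}(R,A,\rho)) = HH_n(R,A,\rho)$, $H_n(CC(R,A,\rho)) = HC_n(R,A,\rho)$ and $H_n(\wt{CC}(R,A,\rho)) = HC_{n-2}(R,A,\rho)$, is exactly the claimed sequence, with $I$ and $S$ induced by the column inclusion and projection and $B$ appearing as the connecting homomorphism of the snake lemma. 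Naturality in $(R,A,\rho)$ is inherited from the functoriality of every ingredient.

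The only technical point requiring care is the bookkeeping with the triple grading, needed to check that the two-column horizontal shift of $CC(\mathfrak{N}_*(R,A,\rho))$ induces precisely a two-degree shift on the total complex; this is a routine unravelling of indices. With that in hand, the rest of the argument is purely formal homological algebra.
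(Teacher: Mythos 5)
Your proposal is correct and follows essentially the same route as the paper: the paper's proof consists precisely of exhibiting the short exact sequence of complexes $0\to CC^{\{2\}}(R,A,\rho)\to CC(R,A,\rho)\to CC(R,A,\rho)_{-2}\to 0$ (the quotient being the two-degree shift of $CC(R,A,\rho)$) and taking the long exact homology sequence. You merely spell out the degree-wise construction via the column decomposition of the cyclic bicomplex of the nerve, which the paper leaves implicit.
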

\begin{proof}
There is a short exact sequence of complexes
$$
0\lra CC^{\{2\}}(R,A,\rho) \lra CC(R,A,\rho)\lra CC(R,A,\rho)_{-2}\lra 0\;,
$$
where $CC_n(R,A,\rho)_{-2}=CC_{n-2}(R,A,\rho)$. This implies the result.
\end{proof}

%%Subsection 3.3
\subsection{Five-term exact sequences}
Now we establish the five-term exact sequences relating the
Hochschild and cyclic homologies of crossed modules of algebras and
their cokernel algebras.

%%Theorem 3.3.1
\begin{theorem}\label{five}
Let $(R,A,\rho)$ be a crossed module of algebras. There are exact
sequences of vectorspaces
\begin{align*}
& HH_2(R,A,\rho)\lra HH_2(\Coker\rho)\lra \Ker\rho/[A,\Ker\rho]\lra HH_1(R,A,\rho)\\
& \lra HH_1(\Coker\rho)\lra 0,\\
& HC_2(R,A,\rho)\lra HC_2(\Coker\rho)\lra \Ker\rho/[A,\Ker\rho]\lra HC_1(R,A,\rho)\\
& \lra HC_1(\Coker\rho)\lra 0
\end{align*}
and equality
$$
HH_0(R,A,\rho)=HC_0(R,A,\rho)=\Coker\rho\big/[\Coker\rho, \Coker\rho].
$$
\end{theorem}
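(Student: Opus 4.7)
The plan is to analyse the first-quadrant spectral sequence of the bicomplex $C(\mathfrak{N}_*(R,A,\rho))$ (respectively $CC(\mathfrak{N}_*(R,A,\rho))$) obtained by taking horizontal (simplicial) homology first and then the induced vertical (Hochschild, resp.\ cyclic) differential. Writing $\mathfrak{N}_*$ for $\mathfrak{N}_*(R,A,\rho)$, one gets a first-quadrant spectral sequence
$$
E^1_{p,q}=\pi_p(C_q(\mathfrak{N}_*))=\pi_p(\mathfrak{N}_*^{\otm(q+1)}) \Longrightarrow HH_{p+q}(R,A,\rho),
$$
with differentials $d^r\colon E^r_{p,q}\to E^r_{p+r-1,\,q-r}$. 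Over the ground field $\mathbf k$, the Eilenberg--Zilber theorem for simplicial vectorspaces supplies a Künneth identification
$$
\pi_p(\mathfrak{N}_*^{\otm(q+1)}) \cong \bigoplus_{p_1+\cdots+p_{q+1}=p}\pi_{p_1}(\mathfrak{N}_*)\otm\cdots\otm\pi_{p_{q+1}}(\mathfrak{N}_*).
$$
Since the Moore complex of $\mathfrak{N}_*$ coincides with $\rho\colon R\to A$, one has $\pi_0(\mathfrak{N}_*)=\Coker\rho$, $\pi_1(\mathfrak{N}_*)=\Ker\rho$, and $\pi_n(\mathfrak{N}_*)=0$ for $n\ge 2$, so each $E^1_{p,q}$ is a finite direct sum of simple tensors.

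I would then read off the $E^2$-terms entering the five-term sequence. At $p=0$ only $\Coker\rho$ contributes, so $E^1_{0,*}$ is precisely the Hochschild complex of $\Coker\rho$ and $E^2_{0,q}=HH_q(\Coker\rho)$. At $(p,q)=(1,0)$ one has $E^1_{1,0}=\pi_1(\mathfrak{N}_*)=\Ker\rho$, while $E^1_{1,1}=(\Coker\rho\otm\Ker\rho)\oplus(\Ker\rho\otm\Coker\rho)$, on which $b$ sends each tensor to a commutator, so its image in $E^1_{1,0}$ is $[\Coker\rho,\Ker\rho]$. The key algebraic point (recorded in the paper) is that $\Ker\rho$ lies in the two-sided annihilator of $R$, hence the $A$-action on $\Ker\rho$ factors through $\Coker\rho$ and $[\Coker\rho,\Ker\rho]=[A,\Ker\rho]$; thus $E^2_{1,0}=\Ker\rho/[A,\Ker\rho]$. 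Plugging these identifications into the standard five-term exact sequence
$$
H_2\to E^2_{0,2}\overset{d_2}{\to}E^2_{1,0}\to H_1\to E^2_{0,1}\to 0
$$
attached to this spectral sequence gives the Hochschild sequence claimed. The cyclic case is handled identically with $CC$ in place of $C$: at $p=0$ the $E^1$-column is the cyclic bicomplex of $\Coker\rho$, so $E^2_{0,q}=HC_q(\Coker\rho)$; at $(1,0)$, the horizontal operator $1-t$ on the row-$0$ contribution is zero (since $t$ is the identity on $A^{\otm 1}$), so only the column-$0$ Hochschild piece contributes and one arrives at $E^2_{1,0}=\Ker\rho/[A,\Ker\rho]$ once more.

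Finally, the dimension-zero equality follows directly from the same spectral sequences: in bidegree $(0,0)$ there are no incoming or outgoing differentials, so $HH_0(R,A,\rho)=E^\infty_{0,0}=E^2_{0,0}=HH_0(\Coker\rho)=\Coker\rho/[\Coker\rho,\Coker\rho]$, and analogously $HC_0(R,A,\rho)=HC_0(\Coker\rho)=\Coker\rho/[\Coker\rho,\Coker\rho]$, using $HC_0(B)=HH_0(B)=B/[B,B]$ for any algebra $B$. The main technical point I expect to have to justify carefully is the identification of $E^2_{1,0}$ in both the Hochschild and cyclic guises, in particular checking that the cyclic wrap-around term in $b$ and the operators $1-t$ and $N$ contribute nothing extra in this bidegree; once this is in place, everything reduces to reading off the standard five-term sequence.
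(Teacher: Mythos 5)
Your argument is essentially the paper's own proof: both run the first-quadrant spectral sequence of the bicomplex obtained by applying the relevant complex dimension-wise to the nerve, take simplicial homotopy first, use the Eilenberg--Zilber theorem and the K\"unneth formula to identify the $E^1$-page in terms of $\pi_0(\mathfrak{N}_*)=\Coker\rho$ and $\pi_1(\mathfrak{N}_*)=\Ker\rho$, identify $E^2_{1,0}=\Ker\rho/[A,\Ker\rho]$ via the annihilator observation, and read off the five-term sequence of low-degree terms. The one point you should correct is that in the Hochschild case you work with the naive complex $C(\mathfrak{N}_*(R,A,\rho))$, whereas the paper defines $HH_n(R,A,\rho)$ as $H_n(CC^{\{2\}}(R,A,\rho))$; since the algebras here are non-unital, $H_*(C(-))$ is $HH^{naive}_*$, which differs in general from $HH_*$, so your spectral sequence abuts to $HH^{naive}_{p+q}(R,A,\rho)$ with $E^2_{0,q}=HH^{naive}_q(\Coker\rho)$ and proves the naive variant rather than the stated one. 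The repair is exactly the observation you already make for the cyclic case: replacing $C$ by $CC^{\{2\}}$ adds to $E^1_{1,1}$ an extra summand $\Ker\rho$ coming from the bar column, which maps to $E^1_{1,0}$ by $1-t=0$ on $A^{\otimes 1}$, so $E^2_{1,0}$ is unchanged and everything else goes through verbatim; this matches the paper's computation of $E^1_{11}=(\Ker\rho\otimes\Coker\rho)\oplus(\Coker\rho\otimes\Ker\rho)\oplus\Ker\rho$.
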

\begin{proof}
We will only prove the exactness of the first sequence. The proof
for the second is essentially the same and left to the reader.

Consider the bicomplex $CC^{\{2\}}(\mathfrak{N}_*(R,A,\rho))$. Then
there is a first quadrant spectral sequence
$$
E^1_{pq}=H_q(CC^{\{2\}}_p(\mathfrak{N}_*(R, A, \rho)))\Rightarrow
HH_{p+q}(R,A,\rho).
$$
It is easy to check that we have
$$
E^1_{p0}=CC^{\{2\}}_p(\Coker\rho),\quad p\ge 0\quad\text{and}\quad
E^1_{01}=\pi_1(\mathfrak{N}_*(R,A,\rho))=\Ker \rho.
$$
Using the Eilenberg-Zilber Theorem and the K\"unneth Formula we also
have
\begin{align*}
&E^1_{11}=\Big(\pi_1(\mathfrak{N}_*(R,A,\rho))\otimes \pi_0(\mathfrak{N}_*(R,A,\rho))\Big) \oplus \Big(\pi_0(\mathfrak{N}_*(R,A,\rho))\otimes \pi_1(\mathfrak{N}_*(R,A,\rho))\Big)\\
&\oplus \pi_1(\mathfrak{N}_*(R,A,\rho)) =\Big(\Ker\rho \otimes \Coker\rho \Big)\oplus \Big(\Coker\rho \otimes \Ker\rho \Big)\oplus \Ker\rho.
\end{align*}
Continuing calculations we deduce that
$$
E^{\infty}_{00}=E^2_{00}=HH_0(\Coker\rho) \quad \text{and} \quad
E^{\infty}_{10}=E^2_{10}=HH_1(\Coker\rho).
$$
Moreover,
$$
E^2_{01}=\Coker\{E^1_{11}\rightarrow E^1_{01}\}=\Ker\rho /[\Ker\rho
, \Coker\rho]=\Ker\rho /[A,\Ker\rho].
$$
Therefore, we have a differential $d^2:HH_2(\Coker\rho)\rightarrow
\Ker\rho /[A,\Ker\rho]$ of the spectral sequence, which determines
the base term $E^\infty_{20}$ and the fiber term $E^\infty_{01}$
from the following exact sequence:
\begin{align}\label{equation_1}
0\rightarrow E^{\infty}_{20}\rightarrow HH_2(\Coker\rho)\rightarrow
\Ker\rho /[A,\Ker\rho]\rightarrow E^{\infty}_{01} \rightarrow 0\;.
\end{align}
Clearly, we have the short exact sequence
\begin{align}\label{equation_2}
0\rra E^{\infty}_{01}\rra HH_1(R,A,\rho)\rra E^{\infty}_{10}\rra 0
\end{align}
and the epimorphism
\begin{align}\label{equation_3}
HH_2(R,A,\rho)\twoheadrightarrow E^{\infty}_{20}.
\end{align}
Now (\ref{equation_1}), (\ref{equation_2}) and (\ref{equation_3}) imply the required result.
\end{proof}

This statement shows that the Hochschild (resp. cyclic) homology of
a crossed module $(R,A,\rho)$ differs, in general, from the
Hochschild (resp. cyclic) homology of the cokernel algebra $\Coker
\rho$.

%%Subsection 3.4
\subsection{Excision property}
In this subsection we discus some aspects of the excision property
for Hochschild (resp. cyclic) homology of crossed modules. Namely,
we give sufficient conditions for inclusion crossed modules of
algebras when they satisfy the excision property. The concept of
investigation of the problem in general setting according to
Wodzicki is substantially different and will be treated in a
separate paper.

The excision problem for Hochschild (resp. cyclic) homology in the
category of crossed module of algebras is formulated as follows: let
\begin{equation}\label{mimdevr_0}
0 \lra (R,A,\rho) \xrightarrow{(\mu,\nu)} (S,B,\sigma) \xrightarrow{(\eta,\theta)} (T,C,\tau) \lra 0
\end{equation}
be a linearly split extension of crossed modules of algebras. The
crossed module $(R,A,\rho)$ is excisive (or satisfies excision) for
Hochschild (resp. cyclic) homology in the category of crossed
modules if the induced natural long homology sequence
$$
\cdots \rra HH_n(R,A,\rho)\rra HH_n(S,B,\sigma)\rra HH_n(T,C,\tau) \rra HH_{n-1}(R,A,\rho)\rra \cdots
$$
(resp.
$$
\cdots\rra HC_n(R,A,\rho)\rra HC_n(S,B,\sigma)\rra HC_n(T,C,\tau) \rra HC_{n-1}(R,A,\rho)\rra \cdots)
$$
is exact for any linearly split extension (\ref{mimdevr_0}) of the
crossed module $(R,A,\rho)$.

Note that according to the Connes' Periodicity Exact Sequence,
Proposition \ref{prop_period}, the excision properties for
Hochschild and cyclic homologies in the category of crossed modules
of algebras are equivalent.

The aim of this subsection is to prove the following.

%%Theorem 3.4.1
\begin{theorem}\label{exc1}
Let $(I,A,inc)$ be an inclusion crossed module of algebras such that
$H_n^{bar}(I,A,inc)=0$, $n\ge 0$. Then $(I,A,inc)$ is excisive for
Hochschild homology.
\end{theorem}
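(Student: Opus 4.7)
The plan is to mimic Wodzicki's classical excision argument at the level of the bicomplexes associated to the nerves of the crossed modules involved. Given a linearly split extension $0 \to (I,A,inc) \to (S,B,\sigma) \to (T,C,\tau) \to 0$, the nerve functor produces a levelwise linearly split short exact sequence of simplicial algebras $0 \to \mathfrak{N}_*(I,A,inc) \to \mathfrak{N}_*(S,B,\sigma) \to \mathfrak{N}_*(T,C,\tau) \to 0$. Since each $CC^{\{2\}}_q$ depends only on the underlying vectorspace (being built from tensor powers), and tensor powers of surjective linear maps are surjective, we obtain a short exact sequence of total complexes
\[
0 \to K \to CC^{\{2\}}(S,B,\sigma) \to CC^{\{2\}}(T,C,\tau) \to 0,
\]
with $K$ the kernel bicomplex. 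The associated long exact sequence in homology is the candidate excision sequence, so it will remain to show that the natural inclusion $CC^{\{2\}}(I,A,inc) \hookrightarrow K$ induces an isomorphism on homology.

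Next I exploit the functorial short exact sequence of chain complexes $0 \to C \to CC^{\{2\}} \to C^{bar}[1] \to 0$, which gives a $3\times 3$ diagram of short exact sequences of chain complexes comparing the absolute triangle for $(I,A,inc)$ with the kernel-triangle obtained from the $S$-to-$T$ map. By the five lemma applied to the associated long exact sequences, the quasi-isomorphism at the $CC^{\{2\}}$-level reduces to the analogous statements at the naive Hochschild and bar levels: that the inclusions $C(I,A,inc) \to \Ker\bigl(C(S,B,\sigma) \to C(T,C,\tau)\bigr)$ and $C^{bar}(I,A,inc) \to \Ker\bigl(C^{bar}(S,B,\sigma) \to C^{bar}(T,C,\tau)\bigr)$ are quasi-isomorphisms.

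For the bar-complex statement, the source is acyclic by the hypothesis $H^{bar}_n(I,A,inc) = 0$. The target is then analyzed via the spectral sequence of its defining bicomplex: because $(I,A,inc)$ is an inclusion crossed module, $\mathfrak{N}_*(I,A,inc)$ is aspherical with augmentation $A/I$, and by Proposition \ref{prop_1} the hypothesis is equivalent to H-unitality of the algebra $A/I$; this collapses the relevant spectral sequence and yields acyclicity of the kernel bar-complex. For the naive Hochschild statement, one applies Wodzicki's classical excision theorem to the algebra extension $0 \to \mathfrak{N}_p(I,A,inc) \to \mathfrak{N}_p(S,B,\sigma) \to \mathfrak{N}_p(T,C,\tau) \to 0$ at each simplicial level $p$, and assembles the level-wise quasi-isomorphisms into a quasi-isomorphism of total complexes via a spectral-sequence convergence argument on the bicomplex.

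The principal obstacle is justifying the level-wise application of Wodzicki: the classical theorem requires each $\mathfrak{N}_p(I,A,inc)$ to be H-unital, whereas our hypothesis only records the total statement. The inclusion crossed module structure is indispensable here, since it permits the aspherical reduction $H^{bar}_*(I,A,inc) \cong H^{bar}_*(A/I)$ to control the levelwise bar homology. An alternative route, if the levelwise approach proves inadequate, is to adapt Wodzicki's original filtration argument — filtering tensors by the number of factors coming from $(I,A,inc)$ — directly to the bicomplex, exploiting the linear splitting $\mathfrak{N}_p(S,B,\sigma)\cong \mathfrak{N}_p(I,A,inc)\oplus \mathfrak{N}_p(T,C,\tau)$ of vectorspaces to describe the associated graded pieces and invoking the total H-unitality hypothesis to show that the graded pieces with mixed profile vanish in the total-complex homology.
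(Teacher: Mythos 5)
Your setup and first reduction coincide with the paper's: the linearly split extension yields a short exact sequence of total complexes with kernel $K$, and the functorial sequence $0 \to C \to CC^{\{2\}} \to C^{bar}(-)_{-1} \to 0$ together with the five lemma reduces the excision statement to showing that $C(I,A,inc) \to \Ker\big(C(S,B,\sigma) \to C(T,C,\tau)\big)$ and $C^{bar}(I,A,inc) \to \Ker\big(C^{bar}(S,B,\sigma) \to C^{bar}(T,C,\tau)\big)$ are quasi-isomorphisms. You also correctly identify that, via asphericity of $\mathfrak{N}_*(I,A,inc)$ and Corollary \ref{cor_x}, the hypothesis $H^{bar}_n(I,A,inc)=0$ amounts to $H$-unitality of the quotient algebra $A/I$.

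The gap is in the core step. Your primary route --- applying Wodzicki's theorem levelwise to $0 \to \mathfrak{N}_p(I,A,inc) \to \mathfrak{N}_p(S,B,\sigma) \to \mathfrak{N}_p(T,C,\tau) \to 0$ for each simplicial degree $p$ --- fails for exactly the reason you flag: it would require each algebra $\mathfrak{N}_p(I,A,inc)=I\rtimes(\cdots(I\rtimes A))$ to be $H$-unital, and already at $p=0$ this is $A$ itself, about which the hypothesis says nothing. The fallback you sketch (a Wodzicki-style filtration of the bicomplex by the number of tensor factors coming from $(I,A,inc)$, invoking the ``total'' hypothesis to kill the mixed graded pieces) is not a proof: the vanishing of those mixed pieces is precisely what Wodzicki's argument extracts from $H$-unitality of the ideal sitting in each tensor slot, and that ideal is again $\mathfrak{N}_p(I,A,inc)$, not $A/I$. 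The missing idea is to run the spectral sequence of the bicomplex in the other direction: first take the homology of the simplicial vectorspaces $C_p(\mathfrak{N}_*(-))$, which by Eilenberg--Zilber and K\"unneth (the paper's Lemma \ref{lemexc1}) is a direct sum of tensor products of $\pi_0$ and $\pi_1$ of the nerves, equipped with a Hochschild-type $E^1$-differential. After this reduction the only algebra to which Wodzicki's theorem is ever applied is $\pi_0(\mathfrak{N}_*(I,A,inc))\cong A/I$, which \emph{is} $H$-unital by hypothesis: once for the row $q=0$ via the short exact sequence of $\pi_0$'s, and once for the rows $q>0$ via the extension of semidirect products $\pi_1\rtimes\pi_0$ together with a direct-sum decomposition of $C(\pi_1\rtimes\pi_0)$ into subcomplexes $\mathfrak{D}^q$ (using that the induced multiplication on $\pi_1$ vanishes). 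Without this change of direction the argument does not close.
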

\begin{proof}
Consider any linearly split extension (\ref{mimdevr_0}) of crossed
modules of algebras. Hence we have the short exact sequence
\begin{equation}\label{formula_4}
\CD 0 \to \pi_0(\mathfrak{N}_*(I,A,inc))
@>{\pi_0(\mathfrak{N}_*(\mu,\nu))}>>
\pi_0(\mathfrak{N}_*(S,B,\sigma))\\
@>{\pi_0(\mathfrak{N}_*(\eta,\theta))}>>
\pi_0(\mathfrak{N}_*(T,C,\tau)) \to 0.
\endCD
\end{equation}
and the isomorphism
\begin{equation}\label{formula_5}
\CD
\pi_1(\mathfrak{N}_*(S,B,\sigma)) @>{\ovs{\pi_1(\mathfrak{N}_*(\eta,\theta))}\cong}>> \pi_1(\mathfrak{N}_*(T,C,\tau)).
\endCD
\end{equation}
It is also easy to see that we have the following commutative
diagram with exact rows of complexes:
$$
\xymatrix {
0\ar[r] & C(I,A,inc)\; \ar[d]_{C(\mu,\nu)}\ar[r] & CC^{\{2\}}(I,A,inc) \ar[d]_{CC^{\{2\}}(\mu,\nu)}\ar[r] & C^{bar}(I,A,inc)_{-1} \ar[d]^{C^{bar}(\mu,\nu)} \ar[r] & 0\;\\
0\ar[r] & \Ker\big(C(\eta,\theta)\big)\; \ar[r] & \Ker\big(CC^{\{2\}}(\eta,\theta)\big) \ar[r] & \Ker\big(C^{bar}(\eta,\theta)\big)_{-1} \ar[r] & 0,
}
$$
where $C^{bar}_n(I,A,inc)_{-1}=C^{bar}_{n-1}(I,A,inc)$ and $\Ker_n
\{C^{bar}(\eta,\theta)\}_{-1}=\Ker_{n-1} \{C^{bar}(\eta,\theta)\}$.
Clearly, the induced commutative diagram of long exact homology
sequences and the five lemma implies that, if $C(\mu,\nu)$ and
$C^{bar}(\mu,\nu)$ are quasi-isomorphisms, then so is
$CC^{\{2\}}(\mu,\nu)$.

We shall only prove that $C(\mu,\nu)$ is quasi-isomorphic. The proof
that $C^{bar}(\mu,\nu)$ is quasi-isomorphic as well can be
accomplished essentially in the same way and will be omitted.

We need the following.

%%Lemma
\begin{lemma}\label{lemexc1}
Let $(S,B,\sigma)$ be any crossed module of algebras, then there is
an isomorphism
\begin{multline*}
H_q(C_p(\mathfrak{N}_*(S,B,\sigma)))\cong \\ \left\lbrace
\begin{matrix}
\uns{i_0 + \cdots + i_p = q }{\bigoplus}\big( \pi_{i_0}(\mathfrak{N}_*(S,B,\sigma))\otimes \cdots \otimes \pi_{i_p}(\mathfrak{N}_*(S,B,\sigma)) \big)& \text{for} & p\ge 0, \; 0\le q \le p+1\\
 \qquad 0 \hfill & \text{for} & 0\le p < q-1
\end{matrix}
\right.
\end{multline*}
with $i_0, \ldots, i_p=0\;\text{or}\;1$. Moreover, through this
isomorphism the Hochschild differential behaves as follows:
$$
x_0\otimes x_1\otimes \cdots \otimes x_p \mapsto
\ovs{p-1}{\uns{j=0}{\sum}}(-1)^j (x_0\otimes \cdots \otimes x_j
x_{j+1}\otimes \cdots \otimes x_p) + (-1)^p\; (x_p x_0 \otimes
\cdots \otimes x_{p-1}),
$$
where $x_j\in \pi_{i_j} (\mathfrak{N}_*(S,B,\sigma))$, $0\le j\le
p$, and the multiplication $x_j x_{j'}$ is meant as the
multiplication in $\pi_0 (\mathfrak{N}_*(S,B,\sigma))$ or the
bimodule structure of $\pi_0 (\mathfrak{N}_*(S,B,\sigma))$ on $\pi_1
(\mathfrak{N}_*(S,B,\sigma))$.
\end{lemma}
\begin{proof}
The proof, requiring to use again the Eilenberg-Zilber Theorem and
the K\"unneth Formula, is routine and will be omitted.
\end{proof}

\;

Returning to the main proof, consider the bicomplexes
$C(\mathfrak{N}_*(I,A,inc))$ and \lbr $ \CD {\mathbb M}\equiv
\Ker\Big(C(\mathfrak{N}_*(S,B,\sigma))@>{C(\mathfrak{N}_*(\eta,\theta))}>>
C(\mathfrak{N}_*(T,C,\tau))\Big).
\endCD
$
Then there are first quadrant spectral sequences
$$
E^1_{pq}=H_q(C_p(\mathfrak{N}_*(I,A,inc)))\Rightarrow
HH^{naive}_{p+q}(I,A,inc)
$$
and
$$
\CD
\ol{E}^1_{pq}=H_q\Big(\Ker\big(C_p(\mathfrak{N}_*(S,B,\sigma))@>{C_p(\mathfrak{N}_*(\eta,\theta))}>> C_p(\mathfrak{N}_*(T,C,\tau))\big)\Big)\Rightarrow H_{p+q}\big(\Tot({\mathbb M})\big).
\endCD
$$
Moreover, we have the linearly split exact sequence of simplicial
vectorspaces
$$
0\rra {\mathbb M}_{*p} \rra C_p(\mathfrak{N}_*(S,B,\sigma)) \rra
C_p(\mathfrak{N}_*(T,C,\tau)) \rra 0,
$$
implying the short exact homology sequence
$$
0\rightarrow \ol{E}^1_{pq} \rra H_q(C_p(\mathfrak{N}_*(S,B,\sigma)))
\rra H_q(C_p(\mathfrak{N}_*(T,C,\tau))) \rra 0.
$$
Hence, from Lemma \ref{lemexc1} we have an isomorphism
\begin{multline}\label{formula_3}
\ol{E}^1_{pq}\cong \left\lbrace
\begin{matrix}
\uns{i_0 + \cdots + i_p = q }{\bigoplus}\Ker \big\{ \pi_{i_0}(\mathfrak{N}_*(\eta,\theta))\otimes \cdots \otimes \pi_{i_p}(\mathfrak{N}_*(\eta,\theta))\big\} & \text{for} & p\ge 0, \; 0\le q \le p+1\\
 \qquad 0 \hfill & \text{for} & 0\le p < q-1
\end{matrix}
\right.
\end{multline}
with $i_0, \ldots, i_p=0\;\text{or}\;1$.

Clearly, there is a natural morphism of bicomplexes
$C(\mathfrak{N}_*(\mu,\nu)): C(\mathfrak{N}_*(I,A,inc))\to {\mathbb
M}$, inducing the morphism of spectral sequences $f^1:E^1
\rightarrow \ol{E}^1$. To finish the proof it suffices to show that
$f^2_{p q}:E^2_{p q} \rra \ol{E}^2_{p q}$ is an isomorphism for any
$p,q\ge 0$.

We prove the remaining part of the assertion in two cases.

\noindent{\bf Case 1}. \emph{The homomorphism $f^2_{p q}:E^2_{p q}
\rra \ol{E}^2_{p q}$ is an isomorphism for any $p\ge 0$ and $q=0$}.

In effect, by Lemma \ref{lemexc1},  $E^2_{p0}=HH^{naive}_p(
\pi_0(\mathfrak{N}_*(I,A,inc)))$, while by (\ref{formula_3}) we have
$$
\CD \ol{E}^2_{p0}\cong H_p\Big( \Ker\{
C(\pi_0(\mathfrak{N}_*(S,B,\sigma)))@>{C(\pi_0(\mathfrak{N}_*(\eta,\theta)))}>>
C(\pi_0(\mathfrak{N}_*(T,C,\tau)) ) \}\Big).
\endCD
$$
By the assumption on the crossed module $(I,A,inc)$ and Corollary
\ref{cor_x} the algebra $\pi_0(\mathfrak{N}_*(I,A,inc))$ is
$H$-unital, then the result follows from Wodzicki's theorem
\cite{Wo} applied to the short exact sequence of algebras
(\ref{formula_4}).

\;

\noindent{\bf Case 2}. \emph{The homomorphism $f^2_{p q}:E^2_{p q}
\rra \ol{E}^2_{p q}$ is an isomorphism for any $p\ge 0$ and $q>0$}.

Using Lemma \ref{lemexc1} again, we have $E^1_{p q}=0$ and
consequently $E^2_{p q}=0$ for any $p\ge 0$ and $q>0$. Thus, we are
left to show that $\ol{E}^2_{p q}=0$, $p\ge 0$ and $q>0$. In effect,
by (\ref{formula_4}) and (\ref{formula_5}) we have the short exact
sequence of algebras
\begin{multline}\label{formula_8}
0 \lra \pi_0(\mathfrak{N}_*(I,A,inc))\lra \pi_1(\mathfrak{N}_*(S,B,\sigma))\rtimes \pi_0(\mathfrak{N}_*(S,B,\sigma))\\
\lra \pi_1(\mathfrak{N}_*(T,C,\tau))\rtimes \pi_0(\mathfrak{N}_*(T,C,\tau))\lra 0.
\end{multline}
Hence, the fact that $\pi_0(\mathfrak{N}_*(I,A,inc))$ is $H$-unital,
by Wodzicki's theorem \cite{Wo}, implies that we have the
quasi-isomorphism
\begin{equation}\label{formula_10}
\begin{matrix}
C\big(\pi_0(\mathfrak{N}_*(I,A,inc))\big)\rra \Ker\big\{C\big(\pi_1(\mathfrak{N}_*(S,B,\sigma))\rtimes \pi_0(\mathfrak{N}_*(S,B,\sigma))\big)\rra \\
C\big(\pi_1(\mathfrak{N}_*(T,C,\tau))\rtimes \pi_0(\mathfrak{N}_*(T,C,\tau)) \big) \big\}.
\end{matrix}
\end{equation}

Moreover, we have an isomorphisms of vectorspaces

\begin{equation}\label{formula_6}
\begin{matrix}
C_p\big(\pi_1(\mathfrak{N}_*(S,B,\sigma))\rtimes \pi_0(\mathfrak{N}_*(S,B,\sigma))\big)\cong(\pi_0(\mathfrak{N}_*(S,B,\sigma)))^{\otimes \;p+1}\\ \oplus \uns{i_0 + \cdots + i_p =1 }{\ovs{p+1}{\bigoplus}}\big( \pi_{i_0}(\mathfrak{N}_*(S,B,\sigma))\otimes \cdots \otimes \pi_{i_p}(\mathfrak{N}_*(S,B,\sigma)) \big)
\end{matrix}
\end{equation}
and
\begin{equation}\label{formula_11}
\begin{matrix}
C_p\big(\pi_1(\mathfrak{N}_*(T,C,\tau))\rtimes \pi_0(\mathfrak{N}_*(T,C,\tau))\big)\cong(\pi_0(\mathfrak{N}_*(T,C,\tau)))^{\otimes \;p+1}\\ \oplus \uns{i_0 + \cdots + i_p =1 }{\ovs{p+1}{\bigoplus}}\big( \pi_{i_0}(\mathfrak{N}_*(T,C,\tau))\otimes \cdots \otimes \pi_{i_p}(\mathfrak{N}_*(T,C,\tau)) \big)
\end{matrix}
\end{equation}
with $i_0, \ldots, i_p=0\;\text{or}\;1$.

Let us define $\mathfrak{D}_p^q$ and $\ol{\mathfrak{D}}_p^q$ for any
$p\ge 0$ and $q> 0$ by the formulas
$$
\mathfrak{D}_p^q=\left\lbrace
\begin{matrix}
\uns{i_0 + \cdots + i_p = q }{\bigoplus}\big( \pi_{i_0}(\mathfrak{N}_*(S,B,\sigma))\otimes \cdots \otimes \pi_{i_p}(\mathfrak{N}_*(S,B,\sigma)) \big)& \text{for} & q \le p+1\\
 \qquad 0 \hfill & \text{for} & p+1 < q
\end{matrix}
\right.
$$
and
$$
\ol{\mathfrak{D}}_p^q=\left\lbrace
\begin{matrix}
\uns{i_0 + \cdots + i_p = q }{\bigoplus} \big(\pi_{i_0}(\mathfrak{N}_*(T,C,\tau))\otimes \cdots \otimes \pi_{i_p}(\mathfrak{N}_*(T,C,\tau)) \big)& \text{for} & q \le p+1\\
\qquad 0 \hfill & \text{for} & p+1 < q
\end{matrix}
\right.
$$
with $i_0, \ldots, i_p=0\;\text{or}\;1$.

It is easy to show that $\mathfrak{D}^q$ is a sub-complex of $C\big(\pi_1(\mathfrak{N}_*(S,B,\sigma))\rtimes \pi_0(\mathfrak{N}_*(S,B,\sigma))\big)$ through the isomorphism (\ref{formula_6}) and along the Hochschild differential, which is left to the reader (hint: the induced multiplication in $\pi_1(\mathfrak{N}_*(S,B,\sigma))$ vanishes). Similarly, $\ol{\mathfrak{D}}^q$ is a sub-complex of $C\big(\pi_1(\mathfrak{N}_*(T,C,\tau))\rtimes \pi_0(\mathfrak{N}_*(T,C,\tau))\big)$. Moreover, by (\ref{formula_6}) and (\ref{formula_11}), there are decompositions in direct summands of complexes
\begin{align*}
C\big(\pi_1(\mathfrak{N}_*(S,B,\sigma))\rtimes \pi_0(\mathfrak{N}_*(S,B,\sigma))\big)\cong C(\pi_0(\mathfrak{N}_*(S,B,\sigma)))\oplus \; \uns{q\ge 1}{\bigoplus}\; \mathfrak{D}^q
\end{align*}
and
\begin{align*}
C\big(\pi_1(\mathfrak{N}_*(T,C,\tau))\rtimes \pi_0(\mathfrak{N}_*(T,C,\tau))\big)\cong C(\pi_0(\mathfrak{N}_*(T,C,\tau)))\oplus \; \uns{q\ge 1}{\bigoplus}\; \ol{\mathfrak{D}}^q .
\end{align*}
Hence
\begin{multline*}
\Ker\big\{C\big(\pi_1(\mathfrak{N}_*(S,B,\sigma))\rtimes \pi_0(\mathfrak{N}_*(S,B,\sigma))\big)\to C\big(\pi_1(\mathfrak{N}_*(T,C,\tau))\rtimes \pi_0(\mathfrak{N}_*(T,C,\tau)) \big) \big\}\\
\cong \Ker\big\{C(\pi_0(\mathfrak{N}_*(S,B,\sigma)))\xrightarrow{\pi_0(\mathfrak{N}_*(\eta,\theta))} C(\pi_0(\mathfrak{N}_*(T,C,\tau)))\big\}\oplus \uns{q\ge 1}{\bigoplus}\Ker\big\{ \mathfrak{D}^q \to \ol{\mathfrak{D}}^q \big\}.
\end{multline*}
Now the quasi-isomorphism (\ref{formula_10}) implies that the complex $\Ker \big( \mathfrak{D}^q \rra \ol{\mathfrak{D}}^q \big)$ is acyclic for any $q> 0$, which according to (\ref{formula_3}) means that $\ol{E}^2_{pq}=0$ for any $p\ge 0$ and $q>0$ .
\end{proof}

\

%%Section 4
\section{Cotriple Cyclic Homology of Crossed Modules}\label{der}

In this section we assume that $\mathbf k$ is a field of characteristic zero.

%%Subsection 4.1
\subsection{Adjunction}
We begin by constructing an adjoint pair of functors $\xymatrix@C=20pt{\Alg \ar@<0.4ex>[r]^-{F}&
\ar@<0.4ex>[l]^-{U} \XAlg}$.

Assume that the functor $U:\XAlg\to \Alg$ assigns to any crossed
module $(M,R,\mu)$ the direct product $M\times R$. Now, define the
functor $F:\Alg \rra \XAlg$ as follows: for any algebra $A$, let
$F(A)$ denote the inclusion crossed module of algebras
$(\overline{A},A\ast A, inc)$, where $A\ast A$ is the coproduct of
the algebra $A$ with itself, with inclusions $u_1,u_2:A\to A\ast A$,
and $\overline{A}$ is the kernel of the retraction $p_2:A\ast A\to
A$ determined by the conditions $p_2u_2=1_{A}$, $p_2u_1=0$.

%%Proposition 4.1.1
\begin{proposition}\label{2.1}
The functor $F$ is left adjoint to the functor $U$.
\end{proposition}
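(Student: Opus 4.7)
The plan is to exhibit the adjunction via its unit. Define $\eta_A : A \to UF(A) = \overline{A} \times (A \ast A)$ by $\eta_A(a) = (\iota(a), u_2(a))$, where $\iota : A \to \overline{A}$ is the corestriction of $u_1$ (well-defined since $p_2 u_1 = 0$, so $u_1(A) \subseteq \overline{A}$). This is an algebra homomorphism, and the proposed natural bijection sends a morphism $\phi = (\alpha, \beta) : F(A) \to (M, R, \mu)$ of $\XAlg$ to $U(\phi) \circ \eta_A = (\alpha \iota, \beta u_2) : A \to M \times R$. What remains is to produce its inverse and check mutual inverseness and naturality.

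For the inverse, given $\psi = (f, g) : A \to M \times R$, I employ the semidirect product algebra $M \rtimes R$ formed from the $R$-bimodule structure on $M$ provided by the crossed module. The maps $h_1(a) := (f(a), 0)$ and $h_2(a) := (0, g(a))$ are algebra homomorphisms $A \to M \rtimes R$, so the universal property of the coproduct yields a unique $\tilde h : A \ast A \to M \rtimes R$ with $\tilde h u_1 = h_1$ and $\tilde h u_2 = h_2$. I then set $\beta := \sigma \circ \tilde h$, where $\sigma : M \rtimes R \to R$, $(m, r) \mapsto \mu(m) + r$, is an algebra homomorphism by virtue of the Peiffer identity. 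Since $M \times \{0\}$ is a two-sided ideal of $M \rtimes R$, since $\tilde h(u_1(A)) \subseteq M \times \{0\}$, and since $\overline{A}$ is the ideal of $A \ast A$ generated by $u_1(A)$, the restriction $\tilde h|_{\overline{A}}$ factors through $M \times \{0\} \cong M$, yielding the required $\alpha : \overline{A} \to M$.

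Next I would verify that $(\alpha, \beta)$ is a morphism of crossed modules. The square commutes because $\tilde h(x) = (\alpha(x), 0)$ gives $\beta(x) = \sigma(\alpha(x), 0) = \mu \alpha(x)$ for $x \in \overline{A}$. The bimodule-equivariance $\alpha(w \cdot x) = \beta(w) \cdot \alpha(x)$ for $w \in A \ast A$, $x \in \overline{A}$ (and symmetrically on the right) is read off from the identity $\tilde h(wx) = \tilde h(w) \tilde h(x)$ computed in $M \rtimes R$, writing $\tilde h(w) = (\hat w_M, \hat w_R)$ and applying Peiffer to identify $\mu(\hat w_M)\alpha(x)$ with $\hat w_M \alpha(x)$. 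Mutual inverseness of the two constructions is then immediate: the coproduct universal property forces $\beta$ to agree on $u_1(A)$ and $u_2(A)$ with the prescribed values, while the equivariance plus the fact that $\overline{A}$ is ideal-generated by $u_1(A)$ forces $\alpha$ to agree on $u_1(A)$ with the prescribed values and hence everywhere. Naturality in $A$ and $(M, R, \mu)$ is a direct verification.

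The main obstacle is the construction and uniqueness of $\alpha$: the pair $(\overline{A}, A \ast A, inc)$ does not admit a single ``free'' description, so to extract $\alpha$ one must combine the universal properties of the coproduct $A \ast A$ and of the semidirect product $M \rtimes R$, and exploit that $M \times \{0\}$ is an ideal of $M \rtimes R$ while $u_1(A)$ ideal-generates $\overline{A}$. Once these ingredients are in place the rest is formal bookkeeping.
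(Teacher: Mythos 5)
Your proof is correct and takes essentially the same approach as the paper: the unit $(\iota,u_2):A\to\overline{A}\times(A\ast A)$, the homomorphism $A\ast A\to M\rtimes R$ obtained from the coproduct universal property applied to the two canonical embeddings of $M$ and $R$ into the semidirect product, and $\alpha$, $\beta$ recovered by restriction to $\overline{A}$ and by composing with the target map. The only cosmetic difference is that you justify $\tilde h(\overline{A})\subseteq M\times\{0\}$ by noting that $\overline{A}$ is the ideal generated by $u_1(A)$, whereas the paper reads the same containment off the commuting square $pr\circ\gamma=f_B\circ p_2$ of split extensions.
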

\begin{proof} We state that, given an algebra $A$, the homomorphism
$$
(u_1,u_2): A\to \overline{A}\times(A\ast A)=UF(A)
$$
is a universal arrow from $A$ to the functor $U$. Indeed, let
$(S,B,\sg)$ be a crossed module and $f_{S}:A\to S$, $f_{B}:A\to B$
defining homomorphisms of any homomorphism $(f_S,f_B):A\to S\times
B=U(S,B,\sg)$. Then there is a commutative diagram with split short
exact sequences of algebras

$$
\xymatrix{
  \overline{A} \ar[r]^{inc \ \ } \ar[d]_{\al} & A\ast A \ar[d]_{\gamma} \ar@<0.6mm>[r]^{ \ \ p_2} & A \ar[d]^{f_B \;\;,}
  \ar@/_1.3pc/[l]_{u_2}  \\
  S \ar[r]^{i\ \ } & S\rtimes B \ar@<0.6mm>[r]^{\ \ pr } & B  \ar@/^1.3pc/[l]_{j}
  }
$$
where $\gm$ is defined by $\gm u_1=i f_S$ and $\gm u_2=j f_B$, and
$\al$ is the restriction of $\gm$. Let $\be:A\ast A \to B$ be the
unique homomorphism satisfying $\be u_1=\sg f_S$ and $\be u_2=f_B$.
Easy calculations show that $(\al,\be):(\overline{A},A\ast A,
inc)\to (S,B,\sg)$ is a morphism of crossed modules of algebras,
clearly the unique one such that
$(\al\times\be)(u_1,u_2)=(f_S,f_B)$.
\end{proof}

We denote by $W:\Alg\to \Vec$ the usual forgetful functor and by
$T:\Vec\to \Alg$ its left adjoint functor, carrying any vectorspace
$V$ to the free algebra on it. Composing these two adjunctions,
$$
\xymatrix@C=20pt{\Vec \ar@<0.4ex>[r]^-{T}&
\ar@<0.4ex>[l]^-{W} \Alg \ar@<0.4ex>[r]^-{F}&
\ar@<0.4ex>[l]^-{U} \XAlg} \;,
$$
we deduce the following.

%%Proposition 4.1.2
\begin{proposition}\label{2.3}
The functor ${\mathbb F}=F\circ T:\Vec\to \XAlg$, $V\mapsto
({\overline{T(V)}},T(V)\ast T(V), inc)$, is left adjoint to the
functor ${\mathbb U}=W \circ U: \XAlg\to \Vec$, $(R,A,\rho)\mapsto
R\times A$.
\end{proposition}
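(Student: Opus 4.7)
The proof is essentially an invocation of the categorical fact that adjoint functors compose. The plan is to stack two known adjunctions: the free/forgetful adjunction $T \dashv W$ between $\mathbf{Vect}$ and $\mathbf{Alg}$, and the adjunction $F \dashv U$ between $\mathbf{Alg}$ and $\mathcal{X}\mathbf{Alg}$ established in Proposition \ref{2.1}. Composing left adjoints with left adjoints and right adjoints with right adjoints then yields $\mathbb{F} = F \circ T \dashv W \circ U = \mathbb{U}$.

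In a little more detail, I would first remark that $T \dashv W$ is the defining property of the free (non-unital) algebra functor on a vectorspace, so for any vectorspace $V$ and any algebra $A$ there is a natural bijection $\mathrm{Hom}_{\mathbf{Alg}}(T(V), A) \cong \mathrm{Hom}_{\mathbf{Vect}}(V, W(A))$. Combining this with the natural bijection $\mathrm{Hom}_{\mathcal{X}\mathbf{Alg}}(F(A), (R,A',\rho)) \cong \mathrm{Hom}_{\mathbf{Alg}}(A, U(R,A',\rho))$ provided by Proposition \ref{2.1}, I get for every $V \in \mathbf{Vect}$ and every crossed module $(R,A',\rho) \in \mathcal{X}\mathbf{Alg}$ a chain of natural bijections
\begin{align*}
\mathrm{Hom}_{\mathcal{X}\mathbf{Alg}}(\mathbb{F}(V), (R,A',\rho))
&\cong \mathrm{Hom}_{\mathcal{X}\mathbf{Alg}}(F(T(V)), (R,A',\rho))\\
&\cong \mathrm{Hom}_{\mathbf{Alg}}(T(V), U(R,A',\rho))\\
&\cong \mathrm{Hom}_{\mathbf{Vect}}(V, W(U(R,A',\rho)))\\
&= \mathrm{Hom}_{\mathbf{Vect}}(V, \mathbb{U}(R,A',\rho)).
\end{align*}
Naturality of the composite in both variables follows from the naturality of each of the two bijections, so this exhibits $\mathbb{F}$ as left adjoint to $\mathbb{U}$.

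There is no real obstacle here; the statement is a formal consequence of the previous proposition together with the standard free algebra adjunction, and indeed the paragraph preceding the statement essentially performs the composition explicitly. The only thing worth checking carefully is that the description of $\mathbb{F}$ given in the statement matches $F \circ T$, i.e.\ that $\mathbb{F}(V) = (\overline{T(V)}, T(V) \ast T(V), \mathrm{inc})$ and $\mathbb{U}(R,A,\rho) = R \times A$, which is immediate by unwinding the definitions of $F$, $T$, $U$, and $W$.
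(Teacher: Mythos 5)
Your proposal is correct and matches the paper exactly: the paper also obtains Proposition \ref{2.3} by composing the free/forgetful adjunction $T\dashv W$ with the adjunction $F\dashv U$ of Proposition \ref{2.1}, noting that left (resp.\ right) adjoints compose. Your explicit chain of natural bijections merely spells out what the paper leaves implicit.
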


%%Subsection 4.2
\subsection{Construction and elementary properties}\label{sub_mose}
It is known due to \cite{DIL, IL} that the cyclic homology of
algebras is described as the non-abelian derived functors of the
additive abelianisation functor $Ab^{add}:\Alg\to\Vec$,
$Ab^{add}(A)=A/[A,A]$. To generalize the cyclic homology theory to
the category $\XAlg$ in terms of non-abelian derived functors we
need to extend the additive abelianisation functor to this category.
By reason of that we look the functor $Ab^{add}$ as a factorisation
through the category of Lie algebras $\Lie$. Explicitly, there is an
equality $Ab^{add}=Ab \circ {\mathfrak L}$, where ${\mathfrak
L}:\Alg\to \Lie$ is the classical Liesation functor and $Ab:\Lie\to
{\bf Vec}$ is the abelianisation functor of Lie algebras.

Let us construct the natural extension of the functors ${\mathfrak
L}$ and $Ab$ to the category $\XAlg$. First recall from \cite{KL}
that a crossed module of Lie algebras $(M,G,\mu)$ is a Lie
homomorphism $\mu:M\to G$ together with a bilinear map $G\times M\to
M$, $(g,m)\mapsto {}^gm$ satisfying
\par \begin{equation*}
{}^{[g,g']}m= {}^g{({}^{g'}m)}-{}^{g'}{({}^gm)},\qquad {}^g{[m,m']}=
[{}^gm,m']+[m,{}^gm'],
\end{equation*}
such that the following conditions hold:
\begin{align*}
(i)\;\mu({}^gm)=[g,\mu(m)],\quad (ii)\; {}^{\mu(m)}m'=[m,m'] \quad
\text{for all}\quad m,m'\in M,\; g\in G.
\end{align*}
Denote by $\XLie$ the category of crossed modules of Lie algebras.
Then there is a naturally defined functor ${\mathcal X}{\mathfrak
L}:\XAlg\to \XLie$ carrying a crossed module of algebras $\rho:R\to
A$ to the crossed module of Lie algebras ${\mathfrak
L}(\mu):{\mathfrak L}(R)\to {\mathfrak L}(A)$ with ${}^ar=ar-ra$.
Moreover, it is known that an abelian group object in the category
$\XLie$ is just a linear map of vectorspaces and their category is
denoted by $\XVec$. Now the abelianisation of crossed modules of Lie
algebras ${\mathcal{X}Ab}:\XLie\to \XVec$ is left adjoint to the
natural embedding functor $\XVec\subset \XLie$, which is explicitly
given by ${\mathcal{X}Ab}(M,G,\mu)=(M/[G,M], Ab(G), \wt\mu)$ where
$[G,M]$ denotes the subvectorspaces of the Lie algebra $M$ generated
by the elements ${}^gm$ for $g\in G$, $m\in M$, and $\wt\mu$ is the
Lie homomorphism induced by $\mu$.

From the aforementioned discussion we arrive to the definition of
the additive abelianisation functor of crossed modules of algebras
${\mathcal{X}Ab^{add}}:\XAlg\to \XVec$ as
${\mathcal{X}Ab^{add}}={\mathcal{X}Ab}\circ {\mathcal X}{\mathfrak
L}$.

Now we are ready to construct the \emph{cotriple cyclic homology of
crossed modules of algebras}. We assume the reader is familiar with
cotriples and projective classes. See, for example, \cite{bb2} and
\cite[Chapter 2]{InH1} for the background. The adjoint pair of
functors $\xymatrix@C=20pt{ \Vec \ar@<0.4ex>[r]^-{\mathbb F}&
\ar@<0.4ex>[l]^-{\mathbb{U}} \XAlg},$ constructed in the previous
subsection, induces a cotriple
$\mathcal{F}=(\mathcal{F},\delta,\tau)$ in $\XAlg$ by the obvious
way: $\mathcal{F}=\mathbb{F}\mathbb{U}:\XAlg\to \XAlg$,
$\tau:\mathcal{F}\to 1_{\XAlg}$ is the counit and $\delta=\mathbb{F}
u \mathbb{U}:\mathcal{F}\to \mathcal{F}^2$, where $u:1_{\Vec}\to
\mathbb{U}\mathbb{F}$ is the unit of the adjunction. Let $\mathcal
P$ denote the projective class induced by the cotriple $\mathcal F$:
$(R,A,\rho)\in {\mathcal P}$ iff there exists a morphism
$\vthe:(R,A,\rho)\to {\mathcal F}(R,A,\rho)$ such that
${\tau}_{(R,A,\rho)}\vthe =1_{(R,A,\rho)}$.

Given any crossed module $(R,A,\rho)$, there is an augmented
simplicial object $\mathcal{F}_*(R,A,\rho)\to (R,A,\rho)$ in the
category $\XAlg$, where
\begin{align*}
& {\mathcal F_n}(R,A,\rho)={\mathcal F^{n+1}}(R,A,\rho)={\mathcal F}\big({\mathcal F^n}(R,A,\rho)\big),\\
& d^n_i={\mathcal F^i}(\tau_{{\mathcal F^{n-i}}}),\quad s^n_i={\mathcal F^i}(\de_{{\mathcal F^{n-i}}}),\quad
0\le i\le n,
\end{align*} and which is called the {\em ${\mathcal F}$-cotriple
resolution} of $(R,A,\rho)$. Applying the functor
${\mathcal{X}Ab^{add}}$ dimension-wise to $\mathcal{F}_*(R,A,\rho)$
we obtain the simplicial object
$\mathcal{X}Ab^{add}\mathcal{F}_*(R,A,\rho)$ in the category
$\XVec$.

%%Definition 4.2.1
\begin{definition}\label{def4.1.4}
The $n$-th cotriple cyclic homology of a crossed module of algebras
$(R,A,\rho)$ is defined by
$$
{\mathcal HC}_n(R,A,\rho)=
H_n\big({\mathcal{X}Ab^{add}}(\mathcal{F}_*(R,A,\rho))\big),\quad
n\ge 0.
$$
\end{definition}

It is clear that ${\mathcal HC}_n$, $n\ge 0$ is a functor from
$\XAlg$ to $\XVec$. Moreover, for any $(R,A,\rho)\in \XAlg$,
$$
{\mathcal HC}_0(R,A,\rho)\cong
{\mathcal{X}Ab^{add}}(R,A,\rho)=\big(R/[A,R],
A/[A,A],\ol{\rho}\big),
$$
where $\ol{\rho}$ is the linear map induced by $\rho$.

For further investigation of the cotriple cyclic homology of crossed
modules of algebras we need some non-standard simplicial resolutions
in the sense of Barr-Beck \cite{bb2}.

%%Proposition 4.2.2
\begin{proposition}\label{rem4.1.5}
Let $\Big((R_*,A_*,\rho_*),(d^0_0,d^0_0), (R,A,\rho)\Big)$ be an
augmented simplicial crossed module of algebras. Suppose the
following conditions hold:
\begin{enumerate}
\item[(i)] the crossed module $(R_n,A_n,\rho_n)$, $n\ge 0$, belongs to the projective class $\mathcal P$;
\item[(ii)] the augmented simplicial algebras $(R_*,d^0_0,R)$ and $(A_*,d^0_0,A)$ are aspherical.
\end{enumerate}
Then the simplicial crossed modules of algebras $(R_*,A_*,\rho_*)$
and $\mathcal{F}_*(R,A,\rho)$ are homotopically equivalent.
\end{proposition}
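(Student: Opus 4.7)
The strategy I would follow is the Barr--Beck comparison theorem for $\mathcal{F}$-projective resolutions: verify that the augmented simplicial crossed module $(R_*,A_*,\rho_*) \to (R,A,\rho)$ is both $\mathcal{F}$-projective dimensionwise and $\mathbb{U}$-contractible, and then invoke the standard step-by-step lifting argument to produce mutually inverse simplicial morphisms between it and $\mathcal{F}_*(R,A,\rho)$.

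First I would record that the canonical resolution $\mathcal{F}_*(R,A,\rho)\to (R,A,\rho)$ automatically has both of these properties by general cotriple theory: each $\mathcal{F}^{n+1}(R,A,\rho) = \mathbb{F}\mathbb{U}\mathcal{F}^n(R,A,\rho)$ belongs to $\mathcal{P}$, and the unit $u:1_{\Vec}\to \mathbb{U}\mathbb{F}$ of the adjunction of Proposition \ref{2.3} provides a canonical linear contracting homotopy of $\mathbb{U}\mathcal{F}_*(R,A,\rho) \to \mathbb{U}(R,A,\rho)$.

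For $(R_*,A_*,\rho_*)$, condition (i) supplies $\mathcal{F}$-projectivity dimensionwise. To check $\mathbb{U}$-contractibility I apply $\mathbb{U} = W\circ U$ to the augmentation and obtain $\mathbb{U}(R_*,A_*,\rho_*) = R_*\times A_*$ as augmented simplicial vectorspaces. By condition (ii) and the fact that asphericity depends only on the underlying simplicial vectorspace, both $(R_*,d^0_0,R)$ and $(A_*,d^0_0,A)$ have acyclic underlying augmented simplicial vectorspaces, and each therefore admits a linear contracting homotopy (as is used in the proof of Proposition \ref{prop_1}(i)); the direct product of these two contractions yields the required linear contraction of $\mathbb{U}(R_*,A_*,\rho_*)\to \mathbb{U}(R,A,\rho)$.

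Having identified both simplicial crossed modules as $\mathcal{F}$-projective, $\mathbb{U}$-contractible resolutions of $(R,A,\rho)$, I would construct simplicial morphisms $\varphi:(R_*,A_*,\rho_*)\to \mathcal{F}_*(R,A,\rho)$ and $\psi:\mathcal{F}_*(R,A,\rho)\to (R_*,A_*,\rho_*)$ extending $1_{(R,A,\rho)}$ by the standard inductive lifting: at each dimension one uses the adjunction of Proposition \ref{2.3} to reduce the existence of the required lift out of the projective source to producing a linear map into the contractible target, where the contracting homotopy supplies it. The simplicial homotopies $\psi\varphi \simeq 1$ and $\varphi\psi \simeq 1$ are then obtained by the same lifting argument applied one level up. The main obstacle is bookkeeping in the induction, and in particular ensuring that the lifts respect the crossed-module structure rather than only the algebra structure; this is precisely what the adjunction $(\mathbb{F},\mathbb{U})$ is designed to handle, since a morphism out of a projective $\mathbb{F}(V)$ is determined by a linear map $V\to \mathbb{U}(-)$, so the linear contraction on $\mathbb{U}(R_*,A_*,\rho_*)$ automatically generates crossed-module morphisms via $\mathbb{F}$, keeping the construction formally parallel to the classical cotriple comparison argument.
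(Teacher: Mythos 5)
Your proposal is correct and follows essentially the same route as the paper, which simply cites the Barr--Beck comparison theorem (\cite[5.3]{bb2}); you have merely unpacked why its hypotheses hold, namely that condition (i) gives dimensionwise $\mathcal{F}$-projectivity and condition (ii), via the linear contractions on the acyclic underlying augmented simplicial vectorspaces (as in Proposition \ref{prop_1}(i)), gives $\mathbb{U}$-contractibility of $R_*\times A_*$. The verification and the inductive lifting argument you sketch are exactly what the paper leaves implicit.
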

\begin{proof}
Straightforward from \cite[5.3]{bb2}.
\end{proof}

Now we describe several connections between the cyclic homology of
algebras and cotriple cyclic homology of crossed modules. There are
two ways of regarding an algebra $A$ as a crossed module, via the
trivial map $0:0\to A$ and via the identity map $1_A:A\to A$ with
action of $A$ on itself given by multiplication. Respectively there
are full embeddings
$$
i,\; \epsilon : \Alg\to \XAlg
$$
defined by $i A=(0,A,0)$ and $\epsilon A=(A,A,1_A)$. The functor $i$
has a left adjoint $\tau:\XAlg\to\Alg$, $\tau(R,A,\rho)=\Coker \rho$
and also a right adjoint $\kappa : \XAlg \to \Alg$,
$\kappa(R,A,\rho)=A$. On the other hand, the functor $\kappa$ and
the functor $\xi:\XAlg\to\Alg$, $\xi(R,A,\rho)=R$ are left and right
adjoint to the functor $\epsilon$, respectively.

Regarding $\XVec$ as a subcategory of $\XAlg$, the cotriple cyclic
homology $\mathcal{HC}_n(R,A,\rho)$, $n\ge 0$, could be presented as
a linear map  $\xi \mathcal{HC}_n(R,A,\rho)\to \kappa
\mathcal{HC}_n(R,A,\rho)$.

%%Proposition 4.2.3
\begin{proposition}
\begin{enumerate}
\item[(i)] For any crossed module $(R,A,\rho)$ and $n\geq 0$,
$$
\kappa \mathcal{HC}_n(R,A,\rho)\cong HC_n(A).
$$
\item[(ii)] For any algebra $A$ and $n\geq 0$,
$$
\mathcal{HC}_n(i A)\cong i HC_n(A) \quad {\text{and}} \quad
\mathcal{HC}_n(\epsilon A)\cong \epsilon HC_n(A)\;.
$$
\end{enumerate}
\end{proposition}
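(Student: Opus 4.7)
The plan is to reduce each isomorphism to a statement about free simplicial resolutions of $A$ in $\Alg$ and invoke the description of cyclic homology as the non-abelian derived functors of $Ab^{add}$ from~\cite{DIL, IL}. The starting point is the immediate computation, from $\mathcal{X}Ab^{add}(R,A,\rho)=(R/[A,R],A/[A,A],\ol{\rho})$, of the three commutation identities of functors $\XAlg\to\Vec$
\begin{equation*}
\kappa\,\mathcal{X}Ab^{add}=Ab^{add}\,\kappa,\qquad \mathcal{X}Ab^{add}\,i=i\,Ab^{add},\qquad \mathcal{X}Ab^{add}\,\epsilon=\epsilon\,Ab^{add}.
\end{equation*}

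For part (i), I apply $\kappa$ to the $\mathcal{F}$-cotriple resolution $\mathcal{F}_*(R,A,\rho)$. Since the non-unital tensor-algebra functor $T$ preserves coproducts ($T(V)*T(W)\cong T(V\oplus W)$), every term $\kappa\mathcal{F}^{n+1}(R,A,\rho)$ is free. The cotriple resolution is contractible after $\mathbb{U}$, the unit $\eta$ supplying the extra degeneracy, so $\mathbb{U}\mathcal{F}_*(R,A,\rho)\to R\times A$ is a contractible augmented simplicial vectorspace; since homotopy groups split along the direct-sum decomposition $\mathbb{U}\cong\xi\oplus\kappa$, the summand $\kappa\mathcal{F}_*(R,A,\rho)\to A$ is aspherical as an augmented simplicial algebra. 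Thus it is a free simplicial resolution of $A$, and the commutation $\kappa\,\mathcal{X}Ab^{add}=Ab^{add}\,\kappa$ together with~\cite{DIL, IL} give
\begin{equation*}
\kappa\mathcal{HC}_n(R,A,\rho)=H_n\bigl(Ab^{add}\kappa\mathcal{F}_*(R,A,\rho)\bigr)=HC_n(A).
\end{equation*}

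For part (ii), fix a free simplicial resolution $P_*\to A$ of $A$ in $\Alg$ and consider $iP_*\to iA$ and $\epsilon P_*\to\epsilon A$. The asphericity condition of Proposition~\ref{rem4.1.5} is trivial for the $R$-rows and holds by choice for the $A$-rows; the nontrivial input is that $iT(V),\epsilon T(V)\in\mathcal{P}$. Unwinding the counit $\tau$ through the proof of Proposition~\ref{2.1}, its algebra part on $iT(V)$ is given by $\beta u_1=0$, $\beta u_2=\pi$ (with $\pi:T(T(V))\to T(V)$ the free-algebra counit), and on $\epsilon T(V)$ by $\beta u_1=\mathrm{pr}_1\pi$, $\beta u_2=\mathrm{pr}_2\pi$ (with $\pi:T(T(V)\times T(V))\to T(V)\times T(V)$). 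Taking an algebra section $s:T(V)\to T(T(V))$ of $\pi$ in the first case, and in the second case extending the linear map $V\ni v\mapsto(v,0)\in T(V)\times T(V)$ to an algebra homomorphism $s:T(V)\to T(T(V)\times T(V))$, the assignments $p\mapsto u_2(s(p))$ and $p\mapsto u_1(s(p))$ give the required sections $\vartheta$; in the second case $u_1(s(p))$ automatically lies in $\overline{T(T(V)\times T(V))}=\ker p_2$, yielding $\vartheta_R$ by corestriction. Proposition~\ref{rem4.1.5} then provides homotopy equivalences $\mathcal{F}_*(iA)\simeq iP_*$ and $\mathcal{F}_*(\epsilon A)\simeq\epsilon P_*$, and applying $\mathcal{X}Ab^{add}$ with the commutation identities yields
\begin{equation*}
\mathcal{HC}_n(iA)\cong H_n(iAb^{add}P_*)=iHC_n(A),\qquad \mathcal{HC}_n(\epsilon A)\cong H_n(\epsilon Ab^{add}P_*)=\epsilon HC_n(A).
\end{equation*}

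The main obstacle is the projectivity check in (ii): the counit $\tau$ is assembled from the two-step adjunction $\mathbb{F}=F\circ T$, $\mathbb{U}=W\circ U$, so one has to trace the formulas from the proof of Proposition~\ref{2.1} carefully to verify that the proposed $\vartheta$ really is a morphism of crossed modules (equivariance and compatibility with the structure map) and composes with $\tau$ to the identity on both components. Everything else, including (i), is formal once this step is settled.
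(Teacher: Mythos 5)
Your proposal is correct and follows essentially the same route as the paper: for (i) you identify $\kappa\mathcal{F}_*(R,A,\rho)\to A$ as a free simplicial resolution and invoke \cite{DIL}, and for (ii) you verify $iP_n,\epsilon P_n\in\mathcal P$ and apply Proposition \ref{rem4.1.5} together with the commutation of $\kappa$, $i$, $\epsilon$ with the abelianisation functors. The only difference is that you spell out the projectivity check and the asphericity of $\kappa\mathcal{F}_*$, which the paper dismisses as routine; your explicit sections $u_2s$ and $u_1s$ are the correct ones.
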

\begin{proof}
\noindent (i) Given a crossed module $(R,A,\rho)$, by Proposition
\ref{rem4.1.5} the simplicial algebra $\kappa
\mathcal{F}_*(R,A,\rho)\to A$ is a free simplicial resolution of
$A$. Therefore
$$
\kappa \mathcal{HC}_n(R,A,\rho)=\kappa H_n({\mathcal{X}Ab^{add}}(\mathcal{F}_*(R,A,\rho)))=H_n(Ab^{add}(\kappa\mathcal{F}_*(R,A,\rho))).
$$
The assertion follows from \cite[Theorem 1.1]{DIL}.

\noindent (ii) Let $(A_*,d^0_0,A)$ be a free simplicial resolution
of an algebra $A$. It is routine to check that $i A_n$ and $\ep A_n$
belong to the projective class $\mathcal P$ for any $n\ge 0$. Then
by Proposition \ref{rem4.1.5} we have
$$
\mathcal{HC}_n(i A)\cong H_n({\mathcal{X}Ab^{add}}(i A_*))=
H_n(i{Ab^{add}}(A_*))= i H_n({Ab^{add}}(A_*))\cong i HC_n(A)
$$
and
$$
\mathcal{HC}_n(\epsilon A)\cong H_n({\mathcal{X}Ab^{add}}(\epsilon
A_*))= H_n(\epsilon{Ab^{add}}(A_*))= \epsilon
H_n({Ab^{add}}(A_*))\cong \epsilon HC_n(A).
$$
\end{proof}

Finally in this subsection we calculate the cotriple cyclic homology
of an inclusion crossed module of algebras.

%%Proposition 4.2.4
\begin{proposition}\label{relat}
Let $(I, A, inc)$ be an inclusion crossed module of algebras and
$n\ge 0$. Then there is an isomorphism
$$
\xi \mathcal{HC}_n(I, A, inc)\cong HC_n(A,I),
$$
where $HC_n(A,I)$ denotes the $n$-th relative cyclic homology.
\end{proposition}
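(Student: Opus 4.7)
The plan is to compute both sides of the asserted isomorphism using the explicit $\mathcal{F}$-cotriple resolution $\mathcal{F}_*(I,A,inc)=(R_*,A_*,inc_*)$, match the relative cyclic complex for the pair $(A_*,R_*)$ with $HC_*(A,I)$, and collapse an appropriate spectral sequence. To start, I would unfold $\mathcal F$ explicitly: each $A_n$ is a free coproduct $T(V_n)\ast T(V_n)\cong T(V_n\oplus V_n)$ of free associative algebras, $R_n=\overline{T(V_n)}$ is the kernel of the algebra retraction $p_2:A_n\to T(V_n)$, and $p_2$ carries the algebra section $u_2$; consequently $A_n/R_n\cong T(V_n)$ is again free. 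By Proposition \ref{rem4.1.5}, the augmented simplicial algebras $R_*\to I$ and $A_*\to A$ are aspherical, and applying the long exact homotopy sequence of $0\to R_*\to A_*\to A_*/R_*\to 0$ then shows that $A_*/R_*\to A/I$ is aspherical as well.

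Since $\mathcal{X}Ab^{add}(M,G,\mu)=(M/[G,M],Ab(G),\widetilde\mu)$, the left-hand side unfolds as $\xi\mathcal{HC}_n(I,A,inc)\cong\pi_n\bigl(R_*/[A_*,R_*]\bigr)$. For the right-hand side, I would introduce the relative bicomplex ${\mathbb K}:=\ker\bigl(CC(A_*)\to CC(A_*/R_*)\bigr)$. The short exact sequence $0\to\Tot({\mathbb K})\to\Tot(CC(A_*))\to\Tot(CC(A_*/R_*))\to 0$ yields a long exact homology sequence whose outer terms, by Corollary \ref{cor_x}, are $HC_n(A)$ and $HC_n(A/I)$; comparing with the long exact sequence obtained from the defining short exact sequence $0\to CC(A,I)\to CC(A)\to CC(A/I)\to 0$ (with $CC(A,I):=\ker(CC(A)\to CC(A/I))$ computing $HC_*(A,I)$) and applying the five lemma gives $H_n(\Tot({\mathbb K}))\cong HC_n(A,I)$.

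The last step is to compute $H_n(\Tot({\mathbb K}))$ via the spectral sequence $E^1_{p,q}=HC_q(A_p,R_p)\Rightarrow H_{p+q}(\Tot({\mathbb K}))$ obtained by filtering by simplicial degree. Because $\mathbf k$ has characteristic zero and both $A_p$ and $A_p/R_p$ are free associative algebras, $HC_q(A_p)=HC_q(A_p/R_p)=0$ for $q\ge 1$, and the long exact sequence of the pair $(A_p,R_p)$ then forces $HC_q(A_p,R_p)=0$ for all $q\ge 1$. For $q=0$, a direct inspection of $CC(A_p,R_p)$ in total degrees $0$ and $1$ gives $HC_0(A_p,R_p)=R_p/[A_p,R_p]$. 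Hence the spectral sequence collapses onto the row $q=0$, which is precisely the simplicial vectorspace $R_*/[A_*,R_*]$, so $H_n(\Tot({\mathbb K}))\cong\pi_n(R_*/[A_*,R_*])$; composing this with the two identifications above yields the required isomorphism.

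The hardest step is expected to be the chain-level identification $HC_0(A_p,R_p)=R_p/[A_p,R_p]$: in general one only has $[A,R]\subseteq R\cap[A,A]$, and equality here relies crucially on the algebra section $u_2:A_p/R_p\hookrightarrow A_p$ built into the cotriple structure, which gives both $[A_p,A_p]=[A_p,R_p]+[u_2(A_p/R_p),u_2(A_p/R_p)]$ and $u_2(A_p/R_p)\cap R_p=0$, hence $R_p\cap[A_p,A_p]=[A_p,R_p]$.
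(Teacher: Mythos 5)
Your proposal is correct, and it reaches the conclusion by a route whose key computational steps differ from the paper's. The shared skeleton is the same: both use the cotriple resolution, the asphericity of $\xi\mathcal{F}_*\to I$, $\kappa\mathcal{F}_*\to A$ and $\kappa\mathcal{F}_*/\xi\mathcal{F}_*\to A/I$, and Corollary \ref{cor_x} to transport $HC_*(A)$ and $HC_*(A/I)$ to the resolution. Where you diverge is in how the relative object is reduced to $R_*/[A_*,R_*]$. The paper invokes the derived-functor description of cyclic homology from \cite{DIL} to replace $\Tot(CC(\kappa\mathcal{F}_*))$ and $\Tot(CC(\kappa\mathcal{F}_*/\xi\mathcal{F}_*))$ by the much smaller complexes $Ab^{add}(\kappa\mathcal{F}_*)$ and $Ab^{add}(\kappa\mathcal{F}_*/\xi\mathcal{F}_*)$, and then identifies the degreewise kernel $R_p/(R_p\cap[A_p,A_p])$ with $R_p/[A_p,R_p]$ via Quillen's five-term exact sequence together with $HC_1(\kappa\mathcal{F}_p/\xi\mathcal{F}_p)=0$. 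You instead stay inside the relative bicomplex $\mathbb{K}=\ker(CC(A_*)\to CC(A_*/R_*))$ and collapse its spectral sequence using only $HC_q(\text{free})=0$ for $q\ge 1$ and the chain-level computation $HC_0(A_p,R_p)=R_p/[A_p,R_p]$; this is more elementary in that it bypasses both \cite{DIL} and \cite{Qu2}, at the cost of manipulating a larger bicomplex. One remark on your closing paragraph: the identification $HC_0(A_p,R_p)\cong R_p/[A_p,R_p]$ computed from the kernel complex holds for an arbitrary ideal in an arbitrary algebra (the degree-one part of the kernel complex contains $R_p\otimes A_p+A_p\otimes R_p$, whose image under $b$ is exactly $[A_p,R_p]$), so the algebra section $u_2$ is not actually needed there; it would only be needed if you tried to identify $\ker\bigl(HC_0(A_p)\to HC_0(A_p/R_p)\bigr)=R_p/(R_p\cap[A_p,A_p])$ with $R_p/[A_p,R_p]$ — which is precisely the point the paper handles with Quillen's sequence, and for which your splitting argument $R_p\cap[A_p,A_p]=[A_p,R_p]$ is a valid elementary substitute. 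So the worry is misplaced but the auxiliary argument you supply is itself correct, and nothing in the proof breaks.
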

\begin{proof} Let $\mathcal{F}_*=\mathcal{F}_*(I, A, inc)\to (I, A, inc)$ be the cotriple resolution of $(I,A,inc)$. Note that both $\kappa\mathcal{F}_n$ and $\kappa\mathcal{F}_n/\xi\mathcal{F}_n$ are free algebras for each $n\geq 0$. Moreover, $\xi\mathcal{F}_*\to I$ and $\kappa\mathcal{F}_*\to A$ are aspherical augmented simplicial algebras and since $(I,A,inc)$ is an inclusion crossed module, the augmented simplicial algebra $\kappa\mathcal{F}_*/\xi\mathcal{F}_*\to A/I$ is aspherical as well.
We have the commutative diagram of complexes
\begin{equation}
\xymatrix {
CC(A) \ar[r] &  CC(A/I)\\
\Tot( CC(\kappa\mathcal{F}_*)) \ar[d]\ar[r]\ar[u] & \Tot(CC(\kappa\mathcal{F}_*/\xi\mathcal{F}_*)) \ar[d]\ar[u]\\
Ab^{add}(\kappa\mathcal{F}_*) \ar[r] &  Ab^{add}({\kappa\mathcal{F}_*/\xi\mathcal{F}_*})\;\;.
}
\end{equation}
Clearly, by Proposition \ref{prop_1}, the both vertical morphisms in the upper quadrant are quasi-isomorphisms. Furthermore, by \cite{DIL}, the both vertical morphisms in the lower quadrant are also quasi-isomorphisms. Consequently, we have an isomorphism
$$
HC_n(A,I)\cong H_n (\Ker
\{Ab^{add}(\kappa\mathcal{F}_*)\rightarrow
Ab^{add}(\kappa\mathcal{F}_*/\xi\mathcal{F}_*)\}
),\quad n\ge 0.
$$
Now from the five-term exact cyclic homology sequence of \cite{Qu2}
and the fact that $HC_1(\kappa\mathcal{F}_n/\xi\mathcal{F}_n)=0$,
$n\ge 0$ \cite{Lo}, we deduce
$$
\Ker
\{Ab^{add}(\kappa\mathcal{F}_*)\rightarrow
Ab^{add}(\kappa\mathcal{F}_*/\xi\mathcal{F}_*)\}
\cong \xi\mathcal{F}_*/[\kappa\mathcal{F}_*, \xi\mathcal{F}_*].
$$
But, by definition
$$
\xi \mathcal{HC}_n(I,A,inc)=\xi
H_n({\mathcal{X}Ab^{add}}\mathcal{F}_*)=
H_n(\xi\mathcal{F}_*/[\kappa\mathcal{F}_*, \xi\mathcal{F}_*]).
$$
This completes the proof.
\end{proof}

%%Corollary 4.2.5
\begin{corollary}
Let $(I, A, inc)$ be an inclusion crossed module of algebras. and
$n\ge 1$. Then there is a long exact homology sequence
\begin{multline}\label{bolo}
\cdots \lra\xi\mathcal{HC}_n(I, A, inc)\lra HC_n(A)\lra
HC_n(A/I)\lra \cdots \lra HC_1(A/I)\lra I\big/[A,I]\\\lra
A\big/[A,A]\lra A\big/\big(I+[A,A]\big)\lra 0.
\end{multline}
\end{corollary}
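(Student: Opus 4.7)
The plan is to obtain this long exact sequence as the homology sequence associated to a short exact sequence of simplicial vector spaces extracted from the proof of Proposition \ref{relat}. Writing $\mathcal{F}_* = \mathcal{F}_*(I,A,inc)$ for the cotriple resolution, that proof already identifies
$$\xi\mathcal{F}_*/[\kappa\mathcal{F}_*,\xi\mathcal{F}_*] \cong \Ker\bigl\{Ab^{add}(\kappa\mathcal{F}_*) \to Ab^{add}(\kappa\mathcal{F}_*/\xi\mathcal{F}_*)\bigr\}.$$
Since we work over the field $\mathbf{k}$, the surjection $\kappa\mathcal{F}_n \twoheadrightarrow \kappa\mathcal{F}_n/\xi\mathcal{F}_n$ splits linearly in each simplicial degree, so the induced sequence
$$0 \to \xi\mathcal{F}_*/[\kappa\mathcal{F}_*,\xi\mathcal{F}_*] \to Ab^{add}(\kappa\mathcal{F}_*) \to Ab^{add}(\kappa\mathcal{F}_*/\xi\mathcal{F}_*) \to 0$$
is dimensionwise short exact, and hence gives rise to a long exact homology sequence.

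Next I would identify the three families of terms appearing in this long sequence. By the definition of cotriple cyclic homology, the $n$-th homology of the left-hand complex is exactly $\xi\mathcal{HC}_n(I,A,inc)$. For the other two, I would invoke the asphericity of the augmented simplicial algebras $\kappa\mathcal{F}_* \to A$ and $\kappa\mathcal{F}_*/\xi\mathcal{F}_* \to A/I$ established in the proof of Proposition \ref{relat}, together with the cyclic-homology analogue of Proposition \ref{prop_1} from \cite{DIL}: the homology of $Ab^{add}$ applied to a free aspherical simplicial resolution of a (non-unital) algebra computes its cyclic homology. This identifies the middle and right terms with $HC_n(A)$ and $HC_n(A/I)$ respectively, yielding the required sequence for all $n \geq 1$.

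The last step is to match up the three terms at the tail. The explicit description of $\mathcal{X}Ab^{add}$ on objects immediately gives $\xi\mathcal{HC}_0(I,A,inc) = I/[A,I]$, while the standard identification $HC_0(-) = -/[-,-]$ yields $HC_0(A) = A/[A,A]$ and $HC_0(A/I) = A/(I+[A,A])$. I do not anticipate serious difficulty here; the only point requiring some care is verifying that the connecting homomorphism $HC_1(A/I) \to I/[A,I]$ produced by the long exact sequence in the bottom degree coincides with the expected boundary, but this is a routine unwinding of the snake lemma applied to the degree-zero portion of the short exact sequence of complexes constructed above.
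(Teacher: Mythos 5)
Your argument is correct, but it is organized differently from what the paper intends by ``straightforward from Proposition \ref{relat}.'' The paper's route is simply to substitute the isomorphism $\xi\mathcal{HC}_n(I,A,inc)\cong HC_n(A,I)$ into the defining long exact sequence of relative cyclic homology (whose tail, identifying the degree-zero relative term with $I/[A,I]$, is Quillen's five-term sequence); all the work is already packaged in the proposition. You instead rebuild the sequence on the simplicial side: the dimensionwise short exact sequence
$$0 \to \xi\mathcal{F}_*/[\kappa\mathcal{F}_*,\xi\mathcal{F}_*] \to Ab^{add}(\kappa\mathcal{F}_*) \to Ab^{add}(\kappa\mathcal{F}_*/\xi\mathcal{F}_*) \to 0$$
plus the identification of the homology of the three columns via the definition of $\mathcal{HC}_*$ and the derived-functor description of $HC_*$ from \cite{DIL}. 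This buys a self-contained derivation that never mentions $HC_n(A,I)$ and realizes the boundary maps concretely at the level of the cotriple resolution, at the cost of re-proving the relative long exact sequence. One small imprecision: the left-exactness of the displayed sequence in each simplicial degree is \emph{not} a consequence of the linear splitting of $\kappa\mathcal{F}_n\twoheadrightarrow\kappa\mathcal{F}_n/\xi\mathcal{F}_n$ (a linear section does not by itself make $\xi\mathcal{F}_n/[\kappa\mathcal{F}_n,\xi\mathcal{F}_n]\to Ab^{add}(\kappa\mathcal{F}_n)$ injective); it comes from the identification of the kernel that you quote in your first display, which in turn rests on Quillen's five-term sequence together with $HC_1(\kappa\mathcal{F}_n/\xi\mathcal{F}_n)=0$ for free algebras in characteristic zero. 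Since you invoke that identification anyway, the conclusion stands, but the splitting remark should be dropped or corrected.
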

\begin{proof}
Straightforward from Proposition \ref{relat}.
\end{proof}

\

%%Section 5
\section{Cyclic homology vs. cotriple cyclic homology}

In this section we compare two above-discussed cyclic homology
theories of crossed modules of algebras in terms of long exact
homology sequence. Namely, the aim of this section is to prove the
following.

%%Theorem 5.1
\begin{theorem}\label{conection}
Let $\mathbf k$ be a field of characteristic zero and $(R, A, \rho)$ a crossed module of algebras. Then there are natural
exact sequences
\begin{multline}\label{mimdevr_1}
\cdots \lra HC_{n+1}(R,A,\rho) \lra \xi {\mathcal HC}_n(R,A,\rho) \lra HC_n(A) \lra HC_n(R,A,\rho)\\
\lra \cdots \lra \xi {\mathcal HC}_1(R,A,\rho) \lra HC_1(A) \lra HC_1(R,A,\rho) \lra R/[A,R] \\ \ovs{\rho}{\lra} A/[A,A] \lra A\big/\big(\Im\rho+[A,A]\big)\lra 0
\end{multline}
and
\begin{multline}\label{mimdevr_2}
\cdots \lra \xi {\mathcal HC}_{n-1}(R,A,\rho) \lra H_{n+1}(\be(R,A,\rho)) \lra \xi {\mathcal HC}_n(R,A,\rho) \\
\lra \xi {\mathcal HC}_{n-2}(R,A,\rho) \lra \cdots \lra \xi {\mathcal HC}_1(R,A,\rho) \lra H_3(\be(R,A,\rho)) \lra \xi {\mathcal HC}_2(R,A,\rho)\\ \lra \xi {\mathcal HC}_0(R,A,\rho) \lra H_2(\be(R,A,\rho)) \lra \xi {\mathcal HC}_1(R,A,\rho) \lra 0.
\end{multline}
Moreover, there are isomorphisms
$$
H_1(\be(R,A,\rho))\cong \xi {\mathcal HC}_0(R,A,\rho)\cong R/[A,R],
$$
where the complex $\be$ is defined immediately below.
\end{theorem}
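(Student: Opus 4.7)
The plan is to exploit the cotriple resolution $\mathcal{F}_*(R,A,\rho)$ from Section~4. The key feature used is that, by the construction of the functor $\mathbb{F}$ in Proposition~\ref{2.3}, every $\mathcal{F}_p = \mathbb{F}(V_p) = (\overline{T(V_p)}, T(V_p)\ast T(V_p), inc)$ in the resolution is an inclusion crossed module with free-algebra components. Consequently Corollary~\ref{cor_x} identifies $HC_*(\mathcal{F}_p)$ with the classical cyclic homology of the quotient $\kappa\mathcal{F}_p/\xi\mathcal{F}_p$, while Proposition~\ref{relat} identifies $\xi\mathcal{HC}_*(\mathcal{F}_p)$ with the relative cyclic homology $HC_*(\kappa\mathcal{F}_p, \xi\mathcal{F}_p)$. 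Moreover, the free algebras $\xi\mathcal{F}_p$ are $H$-unital, so Wodzicki's theorem supplies the relative long exact sequence at every simplicial level $p$.

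First I would form the tri-complex $CC\bigl(\mathfrak{N}_*(\mathcal{F}_*(R,A,\rho))\bigr)$ and run its two spectral sequences. In the cotriple direction, Proposition~\ref{rem4.1.5} combined with Proposition~\ref{prop_1} applied to the functors underlying the bicomplex~(\ref{cyc_bicom}) identifies its total homology with $HC_n(R,A,\rho)$. Comparing the total complex with $CC(\kappa\mathcal{F}_*)$ (which computes $HC_n(A)$ by Proposition~\ref{prop_1}) through the levelwise short exact sequence $0\to\xi\mathcal{F}_p\to\kappa\mathcal{F}_p\to\kappa\mathcal{F}_p/\xi\mathcal{F}_p\to 0$ yields a short exact sequence of (bi)complexes; its connecting long exact sequence, after identifying the $\xi\mathcal{F}_*$-contribution with the cotriple cyclic complex via a simplicially extended version of Proposition~\ref{relat}, gives sequence~(\ref{mimdevr_1}).

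For~(\ref{mimdevr_2}) I would define $\be(R,A,\rho)$ as the $CC^{\{2\}}$-analog at the level of the cotriple resolution, so that $\be$ plays for the cotriple cyclic theory the role that the Hochschild bicomplex plays for ordinary cyclic homology. The standard short exact sequence $0\to CC^{\{2\}}\to CC\to CC[-2]\to 0$, applied in this cotriple setting (compare Proposition~\ref{prop_period}), then yields the Connes-type sequence~(\ref{mimdevr_2}). The isomorphism $H_1(\be(R,A,\rho))\cong \xi\mathcal{HC}_0(R,A,\rho)\cong R/[A,R]$ follows from direct inspection of the bottom degrees, combined with the explicit formula for $\xi\mathcal{HC}_0$ given right after Definition~\ref{def4.1.4}. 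The main obstacle is the precise identification, inside the spectral sequence of the tri-complex, of the cofiber of the comparison map $CC(A)\to Tot\bigl(CC(\mathfrak{N}_*(R,A,\rho))\bigr)$ with an appropriate shift of the cotriple cyclic complex: this requires a careful double spectral sequence argument using the $H$-unitality of the free algebras $\xi\mathcal{F}_p$ (and hence Wodzicki's excision) applied simultaneously at every simplicial level of the cotriple resolution.
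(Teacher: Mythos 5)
Your plan correctly identifies the main tools (the cotriple resolution, levelwise freeness, spectral sequences of the resulting bi/tri-complexes, and the identification of the relative term with $\xi\mathcal{HC}_*$), but it has a concrete false step at exactly the point you flag as the ``main obstacle''. A free non-unital algebra $T(V)=\bigoplus_{n\ge 1}V^{\otm n}$ is \emph{not} $H$-unital: $H_0^{bar}(T(V))=T(V)/T(V)^2\cong V\neq 0$. So Wodzicki's excision cannot be invoked for the ideals $\xi\mathcal{F}_p$, and the ``careful double spectral sequence argument using the $H$-unitality of the free algebras $\xi\mathcal{F}_p$'' does not exist as described. The paper never needs this: at each simplicial level the relative cyclic complex $\Ker\{CC(\kappa\mathcal{F}_p)\to CC(\kappa\mathcal{F}_p/\xi\mathcal{F}_p)\}$ is controlled instead by Quillen's five-term sequence together with the vanishing of $HC_{\ge 1}$ of free algebras in characteristic zero (this is how Proposition \ref{relat} is proved), and the analogous vanishing of $HH_{\ge 2}$ and $HC_{\ge 1}$ of free algebras is what drives Proposition \ref{prop_axali}.

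The second gap concerns $\be$ itself. In the paper $\be(R,A,\rho)=\Coker\big(CC^{\{2\}}(0,A,0)\to CC^{\{2\}}(R,A,\rho)\big)$ and its cyclic companion is $\gm(R,A,\rho)=\Coker\big(CC(0,A,0)\to CC(R,A,\rho)\big)$; both are built from the nerve of the \emph{single} crossed module, not from the cotriple resolution, so your proposed $\be$ is a different object and your argument would not prove the stated theorem about this one. The real engine of the proof is the isomorphism $H_{n+1}\gm(R,A,\rho)\cong\xi\mathcal{HC}_n(R,A,\rho)$, obtained by forming the bicomplex $\gm(R_*,A_*,\rho_*)$ over a Barr--Beck resolution: one spectral sequence collapses by the acyclicity statement of Lemma \ref{lem_5.4}, the other collapses because for projective crossed modules $H_q\gm$ is concentrated in $q=1$ where it equals $R_p/[A_p,R_p]$ (Propositions \ref{prop_2} and \ref{prop_axali}). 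Sequence (\ref{mimdevr_1}) then falls out of the long exact sequence of $0\to CC(0,A,0)\to CC(R,A,\rho)\to\gm\to 0$, and (\ref{mimdevr_2}) out of $0\to\be\to\gm\to\gm[2]\to 0$ after substituting the isomorphism. Your sketch of (\ref{mimdevr_2}) via the periodicity sequence ``in the cotriple setting'' produces terms $H_*\gm$ rather than $\xi\mathcal{HC}_*$, and without the isomorphism above there is no way to convert one into the other; this identification is the missing idea rather than a routine verification.
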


Note that the sequence (\ref{mimdevr_1}) is a natural generalisation
of the relative cyclic homology exact sequence (\ref{bolo}).

%%Subsection 5.1
\subsection{The complexes beta and gamma} Given a crossed module $(R, A, \rho)$ of algebras, we have a natural morphism
of crossed modules
$$
(0,1_A):(0,A,0)\lra (R, A,\rho)\;.
$$
It is easy to see that there are injective maps of bicomplexes
\begin{align*}
CC^{\{2\}}(\mathfrak{N}_*(0,1_A)):CC^{\{2\}}(\mathfrak{N}_*(0,A,0))\to CC^{\{2\}}(\mathfrak{N}_*(R,A,\rho))
\end{align*}
and
\begin{align*}
CC(\mathfrak{N}_*(0,1_A)):CC(\mathfrak{N}_*(0,A,0))\to CC(\mathfrak{N}_*(R,A,\rho)),
\end{align*}
which yield the respective injective maps of complexes
$$
i_{(R,A,\rho)}:CC^{\{2\}}(0,A,0)\to CC^{\{2\}}(R,A,\rho)\quad \text{and} \quad j_{(R,A,\rho)}:CC(0,A,0)\to CC(R,A,\rho).
$$
Define the complex beta, $\be(R,A,\rho)$, and gamma, $\gm(R,A,\rho)$, from the following commutative diagram of complexes of vectorspaces with exact rows:
\begin{equation}\label{import}
\xymatrix {
0 \ar[r] & CC^{\{2\}}(0,A,0) \ar[d]\ar[r]^{{i_{(R,A,\rho)}}} & CC^{\{2\}}(R,A,\rho) \ar[d]\ar[r] & \be (R,A,\rho) \ar[d]\ar[r] & 0\\
0 \ar[r] & CC(0,A,0) \ar[r]^{{j_{(R,A,\rho)}}} & CC(R,A,\rho) \ar[r] & \gm (R,A,\rho) \ar[r] & 0
}
\end{equation}

The next two propositions calculate the low dimensional homology of $\be$ and $\gm$ complexes.

%%Proposition 5.1.1
\begin{proposition}\label{prop_2}
Let $(R,A,\rho)$ be a crossed module of algebras. Then we have
$$
H_0\be(R,A,\rho) = H_0 \gm (R,A,\rho)=0
$$
and
$$
H_1\be(R,A,\rho) = H_1 \gm (R,A,\rho)\cong R/[A, R].
$$
\end{proposition}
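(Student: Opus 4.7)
The plan is to compute $\be(R,A,\rho)$ and $\gm(R,A,\rho)$ explicitly in total degrees $\le 2$, which is enough to pin down $H_0$ and $H_1$. I would first observe that both complexes are total complexes of bidegree-wise quotient bicomplexes (with simplicial index $p$ and cyclic index $q$) obtained from $CC^{\{2\}}(\mathfrak{N}_*(R,A,\rho))$ and $CC(\mathfrak{N}_*(R,A,\rho))$ modulo the sub-bicomplex coming from $\mathfrak{N}_*(0,A,0)$, which is the constant simplicial algebra $A$. The extra column that $CC$ carries beyond $CC^{\{2\}}$ first reaches total degree $2$ at bidegree $(0,2)$, but there $CC_2(\mathfrak{N}_0)=CC_2(A)$, so the quotient vanishes. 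Thus $\be$ and $\gm$ will agree through total degree $2$, and I only need to analyze $\be$.

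Since $\mathfrak{N}_0(R,A,\rho)=A$, every bidegree $(0,q)$ contributes zero. Combined with $\mathfrak{N}_1(R,A,\rho)/A\cong R$ and $\mathfrak{N}_2(R,A,\rho)/A\cong R\oplus R$, a direct unwinding will give $\be_0=0$, $\be_1\cong R$, and
\[
\be_2\cong(R\oplus R)\oplus\bigl((R\otm R)\oplus(R\otm A)\oplus(A\otm R)\oplus R\bigr),
\]
with summands coming from bidegrees $(2,0)$ and $(1,1)$. This immediately yields $H_0(\be)=0$; and since $\partial\colon\be_1\to\be_0=0$ is trivial, we will have $H_1(\be)=R/\Im\bigl(\partial\colon\be_2\to R\bigr)$.

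The core step is identifying this image. I would argue that the horizontal face-map contribution from the $(2,0)$-summand, using the given $d_i$'s on $(R\rtimes(R\rtimes A))/A$, equals $(r_1,r_2)\mapsto r_2-(r_1+r_2)+r_1=0$, so contributes nothing; the horizontal map from $(1,1)$ to $(0,1)$ vanishes because its target is zero. That leaves only the vertical $CC^{\{2\}}$-differential on the $(1,1)$-summand, which reduces to the Hochschild $b$ on $\mathfrak{N}_1^{\otm 2}/A^{\otm 2}$ (the operator $1-t$ acting on the bar summand $\mathfrak{N}_1/A$ vanishes in cyclic-degree zero). Explicitly, $b$ will send $r_1\otm r_2\mapsto[r_1,r_2]$, $r\otm a\mapsto ra-ar$ and $a\otm r\mapsto ar-ra$, each landing in $R$. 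The Peiffer identity yields $[r_1,r_2]=[\rho(r_1),r_2]\in[A,R]$, while the remaining generators are already of the form $\pm[a,r]$; conversely every $[a,r]$ is visibly in the image. Hence $\Im(\partial)=[A,R]$, giving $H_1(\be)\cong R/[A,R]$, and the same for $\gm$. The hard part will be the bicomplex sign/index bookkeeping, especially confirming that no horizontal contribution from $(1,1)$ sneaks into $\be_1$, but this is resolved by the observation that the target bidegree $(0,1)$ of the quotient is already zero.
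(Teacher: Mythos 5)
Your proposal is correct and follows essentially the same route as the paper: both arguments reduce to the bottom two rows of the quotient bicomplexes (where $CC$ and $CC^{\{2\}}$ agree, and where the $p=0$ column and the horizontal face-map contribution from bidegree $(2,0)$ die in the quotient), and then identify $H_1$ as the cokernel of the commutator map $(R\rtimes A)^{\otimes 2}/A^{\otimes 2}\to (R\rtimes A)/A$, whose image is $[A,R]$ by the Peiffer identity. Your version merely spells out the bookkeeping that the paper leaves implicit.
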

\begin{proof}
Given a simplicial algebra $A_*$, the last two rows of the bicomplexes $CC^{\{2\}}(A_*)$ and $CC(A_*)$ coincide, which has the form for $\mathfrak{N}_*(0,A,0)$
$$
\CD
  & & A^{\otimes 2}\oplus A @<{}<< A^{\otimes 2}\oplus A @<<< A^{\otimes 2}\oplus A @<{}<< \\
  & & @VVV @VVV @VVV \\
  & & A @<{}<< A @<<< A @<{}<< ,
\endCD
$$
while for $\mathfrak{N}_*(R,A,\rho)$

$$
\CD
  & & A^{\otimes 2}\oplus A @<{}<<(R\rtimes A)^{\otimes 2}\oplus (R\rtimes A) @<<<
  (R\rtimes R\rtimes A)^{\otimes 2}\oplus (R\rtimes R\rtimes A) @<{}<< \\
  & & @VVV @VVV @VVV \\
  & & A @<{}<< R\rtimes A @<<< R\rtimes R\rtimes A @<{}<< .
\endCD
$$
Hence, it is clear that we have
$$
H_0\be(R,A,\rho) = H_0 \gm (R,A,\rho)=0.
$$
Moreover, comparing the given rows of the bicomplexes, we simply deduce that
$$
H_1\be(R,A,\rho) = H_1 \gm (R,A,\rho) = \Coker\Big( (R\rtimes A)^{\otimes
2}\big/A^{\otimes 2} \lra  (R\rtimes A)\big/A \Big)\;,
$$
where the arrow is defined as follows:
$$
(r,a)\otimes (r',a')\mapsto (r,a)(r',a')-(r',a')(r,a) = ([r,r']+[r,a']+[a,r'],[a,a']).
$$
This implies the second isomorphism of the assertion.
\end{proof}

Given an algebra $A$, by Corollary \ref{cor_x} the following pairs of complexes $CC^{\{2\}}(0,A,0)$, $CC^{\{2\}}(A)$ and $CC(0,A,0)$,
$CC(A)$ are quasi-isomorphic. Then taking into account Proposition \ref{prop_2}, for any crossed module $(R,A,\rho)$ of algebras, the diagram (\ref{import}) induces the morphism of long exact homology sequences

\begin{equation}\label{diag_1}
\xymatrix {
 \ar[r] & HH_n(A) \ar[d]\ar[r] & HH_n(R,A,\rho) \ar[d]\ar[r] & H_n\be(R,A,\rho) \ar[d]\ar[r] & \cdots \ar[r] & H_2\be(R,A,\rho)\ar[d]\\
 \ar[r] & HC_n(A) \ar[r] & HC_n(R,A,\rho) \ar[r] & H_n\gm(R,A,\rho) \ar[r] & \cdots \ar[r] & H_2\gm(R,A,\rho)\\
\ar[r] & HH_1(A) \ar[d]\ar[r] & HH_1(R,A,\rho) \ar[d]\ar[r] & R/[A, R] \ar@{=}[d]\ar[r] & HH_0(A) \ar@{=}[d]\ar[r] & HH_0(R,A,\rho) \ar@{=}[d]\ar[r] &  0\\
\ar[r] & HC_1(A) \ar[r] & HC_1(R,A,\rho) \ar[r] & R/[A, R] \ar[r] & HC_0(A) \ar[r] & HC_0(R,A,\rho) \ar[r] &  0.
}
\end{equation}

%%Proposition 5.1.2
\begin{proposition}\label{prop_axali}
Let $(R,A,\rho)\in XAlg$ belong to the projective class $\mathcal P$ (see Subsection \ref{sub_mose}). Then
\begin{enumerate}
\item[(i)] $H_n\be (R,A,\rho)=0$ for any $n>2$;
\item[(ii)] if in addition $\mathbf k$ is a field of characteristic zero, we have
\begin{align*}
H_n\gm (R,A,\rho)=0\quad \text{for any}\quad n>1
\end{align*}
and an isomorphism
\begin{align*}
H_2\be (R,A,\rho)\cong R/[A, R].
\end{align*}
\end{enumerate}
\end{proposition}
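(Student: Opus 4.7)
The plan is to reduce both statements to the case of the ``free'' projectives $\mathbb{F}(V) = (\overline{T(V)}, T(V) \ast T(V), inc)$ and then compute the $\beta$- and $\gamma$-homologies from the long exact sequences of \eqref{diag_1}, exploiting the fact that the nerve of $\mathbb{F}(V)$ is aspherical. The construction in \eqref{import} is natural in $(R,A,\rho)$, so both $\beta$ and $\gamma$ descend to functors $\XAlg \to \mathfrak{C}_{\geq 0}$; a retract of crossed modules therefore produces a retract of the associated complexes and of their homology. Since every $(R,A,\rho) \in \mathcal{P}$ is by definition a retract of $\mathcal{F}(R,A,\rho) = \mathbb{F}(R \times A)$, it suffices to verify the three conclusions for $(R,A,\rho) = \mathbb{F}(V)$, with $V$ an arbitrary vectorspace.

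For $\mathbb{F}(V) = (I,A,inc)$ with $I = \overline{T(V)}$ and $A = T(V) \ast T(V) \cong T(V \oplus V)$, the augmented simplicial algebra $\mathfrak{N}_*(\mathbb{F}(V)) \to A/I \cong T(V)$ is aspherical, so Corollary \ref{cor_x} gives $HH_n(\mathbb{F}(V)) \cong HH_n(T(V))$ and $HC_n(\mathbb{F}(V)) \cong HC_n(T(V))$ for all $n \geq 0$. Since $T(V)$ and $T(V) \ast T(V)$ are free non-unital algebras, their Hochschild homology vanishes in degrees $\geq 2$, and in characteristic zero their cyclic homology vanishes in degrees $\geq 1$ (see \cite{Lo}). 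Plugging these vanishings into the Hochschild and cyclic long exact sequences \eqref{diag_1} immediately yields $H_n\beta(\mathbb{F}(V)) = 0$ for $n \geq 3$, which is (i), and $H_n\gamma(\mathbb{F}(V)) = 0$ for $n \geq 2$, which is the first half of (ii).

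For the isomorphism $H_2\beta(R,A,\rho) \cong R/[A,R]$, I would combine \eqref{import} with the Connes-type short exact sequences $0 \to CC^{\{2\}}(-) \to CC(-) \to CC(-)_{-2} \to 0$ used in Proposition \ref{prop_period}. The snake lemma applied to the resulting $3 \times 3$ diagram of complexes produces a short exact sequence
\begin{equation*}
0 \longrightarrow \beta(R,A,\rho) \longrightarrow \gamma(R,A,\rho) \longrightarrow \gamma(R,A,\rho)_{-2} \longrightarrow 0.
\end{equation*}
Its long exact homology sequence, combined with $H_0\gamma = 0$ and $H_1\gamma \cong R/[A,R]$ from Proposition \ref{prop_2} and the vanishings $H_n\gamma = 0$ for $n \geq 2$ and $H_n\beta = 0$ for $n \geq 3$ already established, truncates around $n = 3$ to
\begin{equation*}
0 \longrightarrow H_1\gamma(R,A,\rho) \longrightarrow H_2\beta(R,A,\rho) \longrightarrow 0,
\end{equation*}
forcing $H_2\beta(R,A,\rho) \cong R/[A,R]$.

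I expect the main obstacle to be essentially bookkeeping: verifying that \eqref{import} is natural enough for the retract reduction to be legitimate, and confirming in the non-unital setting that $T(V)$ and $T(V) \ast T(V)$ enjoy the stated vanishings of Hochschild homology in degrees $\geq 2$ and of cyclic homology in degrees $\geq 1$. Once these are settled, the argument reduces to three applications of the long exact sequence.
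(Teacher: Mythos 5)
Your argument is correct, and for part (i) and the vanishing of $H_n\gm$ it coincides with the paper's: reduce by a retract argument to $(R,A,\rho)=\mathbb{F}(V)$, observe via Corollary \ref{cor_x} that $HH_n(\mathbb{F}(V))\cong HH_n(T(V))$ and $HC_n(\mathbb{F}(V))\cong HC_n(T(V))$, and feed the vanishing of $HH_{\ge 2}$ and (in characteristic zero) $HC_{\ge 1}$ of free algebras into the two long exact sequences of \eqref{diag_1}. Where you genuinely diverge is the isomorphism $H_2\be(R,A,\rho)\cong R/[A,R]$. The paper extracts $H_2\be\cong\Ker\big(HH_1(A)\to HH_1(A/R)\big)$ from \eqref{diag_1} and then identifies this kernel with $R/[A,R]$ by comparing, via the Connes $B$-isomorphism $HC_0(F)\cong HH_1(F)$ for free algebras, with Quillen's five-term relative sequence $0\to R/[A,R]\to HC_0(A)\to HC_0(A/R)\to 0$. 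You instead invoke the shifted short exact sequence $0\to\be\to\gm\to\gm(R,A,\rho)[2]\to 0$ (which the paper only introduces later, in the proof of Theorem \ref{conection}) and read off $0\to H_1\gm\to H_2\be\to 0$ from the segment $H_3\gm\to H_1\gm\to H_2\be\to H_2\gm$ using the vanishings just proved together with $H_1\gm\cong R/[A,R]$ from Proposition \ref{prop_2}. Your route is cleaner in two respects: it needs no external input beyond what is already in the paper at this point (no \cite{Qu2}, no $B$-map computation for free algebras), and it yields the isomorphism directly and naturally for the given crossed module rather than requiring the ``without loss of generality'' reduction to also be justified for an isomorphism statement (which strictly speaking needs naturality of both comparison maps, a point the paper's phrasing glosses over). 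The only small cost is that you must verify the $3\times 3$-lemma construction of $0\to\be\to\gm\to\gm[2]\to 0$ from \eqref{import} and the periodicity sequences of Proposition \ref{prop_period}, but the paper itself asserts and uses exactly this sequence in Section 5, so nothing new is required.
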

\begin{proof}
Without loss of generality we can assume that $(R,A,\rho)={\mathbb
F}(V)$ for some $V\in \Vec$. Hence $R\ovs{\rho}{\hookrightarrow} A$
is an inclusion crossed module with $A$ and $A/R$ being free
algebras. Then, using again Corollary \ref{cor_x}, the pairs of
complexes $CC^{\{2\}}(R,A,\rho)$, $CC^{\{2\}}(A/R)$ and
$CC(R,A,\rho)$, $CC(A/R)$ are quasi-isomorphic.

Now (i) follows directly from the top exact sequence of the diagram
(\ref{diag_1}) and the fact that, for a free algebra $F$, the
Hochschild homology $HH_n(F)$ vanishes for any $n\ge 2$.

If in addition $\mathbf k$ is a field of characteristic zero, then
the cyclic homology $HC_n(F)$ of a free algebra $F$ vanishes as well
for any $n\ge 1$ (see \cite[Proposition 5.4]{Lo}). Therefore the
diagram (\ref{diag_1}) implies that $H_n\gm (R,A,\rho)=0$ for $n>1$.
Besides, by the Connes' periodic exact sequence there is a natural
isomorphism
$$
HC_0(F)\ovs{\ovs{B}{\cong}}{\lra}HH_1(F).
$$
Then the five term exact sequence in cyclic homology from \cite{Qu2}
implies the commutative diagram with exact rows
$$
\xymatrix {
0 \ar[r] & R/[A, R] \ar[d]\ar[r] & HC_0(A) \ar[d]_{\cong}\ar[r] & HC_0(R,A,\rho)\ar[d]^{\cong}\; \cong HC_0(A/R) \ar[r] & 0\\
0 \ar[r] & H_2\be(R,A,\rho) \ar[r] & HH_1(A) \ar[r] & HH_1(R,A,\rho)\; \cong HH_1(A/R) \ar[r] & 0
}
$$
which completes the proof.
\end{proof}

%%Subsection 5.2
\subsection{Proof of Theorem \ref{conection}}
Given a simplicial crossed module $(R_*,A_*,\rho_*)$ and a functor
$\varPhi:\XAlg\to {\mathfrak C}_{\ge 0}$, denote by
$\varPhi(R_*,A_*,\rho_*)$  the bicomplex of vectorspaces obtained by
applying the functor $\varPhi$ dimension-wise to the simplicial
crossed module $(R_*,A_*,\rho_*)$.

The following lemma will be needed.

%%Lemma 5.2.1
\begin{lemma}\label{lem_5.4}
Let $\Big((R_*,A_*,\rho_*),(d^0_0,d^0_0), (R,A,\rho)\Big)$ be an
augmented simplicial crossed module of algebras. Suppose $(R_*,
d^0_0, R)$ and $(A_*, d^0_0, A)$ are aspherical augmented simplicial
algebras. Then the augmented simplicial vectorspaces
$$
(\be_n(R_*,A_*,\rho_*),\be_n(d^0_0,d^0_0), \be_n (R,A,\rho)) \quad
\text{and} \quad (\gm_n(R_*,A_*,\rho_*),\gm_n(d^0_0,d^0_0), \gm_n
(R,A,\rho))
$$
are acyclic for any $n\ge 0$.
\end{lemma}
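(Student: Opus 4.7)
The plan is to observe that, for each fixed $n \ge 0$, the functors $\be_n$ and $\gm_n$ from $\XAlg$ to $\Vec$ factor through the forgetful functor $\XAlg \to \Vec \times \Vec$, $(R,A,\rho) \mapsto (R,A)$, and that the resulting functors on pairs of vectorspaces are built entirely from tensor products and direct sums. Once this factorisation is established, the acyclicity claim reduces to the standard fact that dimension-wise tensor products and finite direct sums of aspherical augmented simplicial vectorspaces over a field are again aspherical.

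To verify the factorisation, recall that $\mathfrak{N}_s(R,A,\rho)$ has underlying vectorspace $R^{\oplus s}\oplus A$, that $CC^{\{2\}}_q$ and $CC_q$ applied to an algebra depend only on its underlying vectorspace (as already used in Corollary \ref{cor_x}), and that the canonical algebra inclusion $A=\mathfrak{N}_s(0,A,0)\hookrightarrow \mathfrak{N}_s(R,A,\rho)$ is, on underlying vectorspaces, just the inclusion of the $A$-summand. Thus the $(s,q)$-entries of the bicomplex quotients defining $\be$ and $\gm$ are direct summands of tensor powers of $R^{\oplus s}\oplus A$; concretely,
\begin{equation*}
(R^{\oplus s}\oplus A)^{\otimes m}\big/A^{\otimes m}\;\cong\; \bigoplus_{\epsilon}X_{\epsilon_1}\otimes\cdots\otimes X_{\epsilon_m},
\end{equation*}
where the sum ranges over non-identically-zero maps $\epsilon:\{1,\dots,m\}\to \{0,1,\dots,s\}$ and $X_0=A$, $X_i=R$ for $i\ge 1$. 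Totalising over $s+q=n$ thereby exhibits $\be_n(R,A,\rho)$ and $\gm_n(R,A,\rho)$ as finite direct sums of tensor products of copies of $R$ and $A$, each summand containing at least one $R$-factor. Moreover the simplicial operators of $(R_*,A_*,\rho_*)$ act only through the underlying pairs of linear maps $R_p\to R_{p'}$ and $A_p\to A_{p'}$ and hence respect this decomposition, so it is natural in the simplicial direction.

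By the asphericity hypothesis, the underlying augmented simplicial vectorspaces $(R_*,d^0_0,R)$ and $(A_*,d^0_0,A)$ are acyclic, equivalently their associated augmented normalised chain complexes are exact. Over the field $\mathbf k$, the K\"unneth formula together with the Eilenberg--Zilber theorem implies that a dimension-wise tensor product of finitely many acyclic augmented simplicial vectorspaces is again acyclic, while finite direct sums of acyclic augmented simplicial vectorspaces are obviously acyclic. Applying these two stabilities summand-by-summand to the decomposition produced in the previous paragraph yields the desired acyclicity of both
$(\be_n(R_*,A_*,\rho_*),\be_n(d^0_0,d^0_0),\be_n(R,A,\rho))$
and
$(\gm_n(R_*,A_*,\rho_*),\gm_n(d^0_0,d^0_0),\gm_n(R,A,\rho))$
for every $n \ge 0$. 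I expect the main technical point to be the clean identification of the canonical direct-sum decomposition together with its naturality in the simplicial direction; once that bookkeeping is set up, the conclusion is a standard application of K\"unneth-type stability of acyclic augmented simplicial vectorspaces over a field.
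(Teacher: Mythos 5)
Your proof is correct, but it follows a genuinely different route from the paper's. The paper never computes the quotient: it shows that the two left-hand terms of the defining short exact sequence
$0\to CC_n(0,A_*,0)\to CC_n(R_*,A_*,\rho_*)\to\gm_n(R_*,A_*,\rho_*)\to 0$
are acyclic augmented simplicial vectorspaces --- first noting that each nerve level $\mathfrak{N}_q(R_*,A_*,\rho_*)$ is an aspherical augmented simplicial algebra (asphericity is preserved by semidirect products), then invoking Proposition \ref{prop_1}(i), whose proof rests on a linear contracting homotopy rather than on K\"unneth --- and concludes acyclicity of $\gm_n$ (and likewise $\be_n$) by the long exact homology sequence. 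You instead identify $\gm_n$ and $\be_n$ explicitly, as direct sums of functors $(R,A)\mapsto X_{\epsilon_1}\otimes\cdots\otimes X_{\epsilon_m}$ each containing an $R$-factor, using that $\mathfrak{N}_s(R,A,\rho)$ has underlying vectorspace $R^{\oplus s}\oplus A$ and that morphisms of crossed modules act diagonally on it, and then apply Eilenberg--Zilber and K\"unneth summand by summand. Both arguments are sound and both use that $\mathbf k$ is a field (the paper for the linear contraction, you for K\"unneth). The paper's version is shorter and reuses machinery already in place; yours costs more bookkeeping but makes the structure of the quotient complexes visible (in the spirit of Lemma \ref{lemexc1} and the $\mathfrak{D}^q$-decomposition in the proof of Theorem \ref{exc1}) and, as a by-product, actually proves the ``semidirect products preserve asphericity'' step that the paper only asserts.
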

\begin{proof}
We shall prove only the acyclicity of the second augmented
vectorspace. The proof for the first is similar.

In fact, using the fact that the semi-direct product of aspherical
simplicial algebra is aspherical as well, the augmented simplicial
algebra
$$
(E_q(R_*,A_*,\rho_*),E_q(d^0_0,d^0_0), E_q (R,A,\rho)),\quad q\ge 0,
$$
is aspherical. Now by Proposition \ref{prop_1} (i) we have that the augmented simplicial vectorspace
$$
(CC_p(E_q(R_*,A_*,\rho_*)),CC_p(E_q(d^0_0,d^0_0)), CC_p(E_q (R,A,\rho))),\quad p\ge 0,\; q\ge 0
$$
is acyclic. This clearly implies that the augmented simplicial vectorspaces
$$
(CC_n(R_*,A_*,\rho_*), CC_n(d^0_0,d^0_0), CC_n (R,A,\rho))
$$
and consequently
$$
(CC_n(0,A_*,0), CC_n(0,d^0_0), CC_n (0,A,0))
$$
are acyclic for any $n\ge 0$. Clearly, the short exact sequence of augmented simplicial vectorspaces

$$
\xymatrix {
0\ar[r] & CC_n(0,A_*,0)\; \ar[d]_{CC_n(0,d^0_0)}\ar[r] & CC_n(R_*,A_*,\rho_*) \ar[d]_{CC_n(d^0_0,d^0_0)}\ar[r] & \gm_n(R_*,A_*,\rho_*) \ar[d]^{\gm_n(d^0_0,d^0_0)}\ar[r]& 0\\
0\ar[r] & CC_n (0,A,0)\; \ar[r] & CC_n (R,A,\rho) \ar[r] & \gm_n (R,A,\rho)\ar[r]& 0,
}
$$
implies the result.
\end{proof}

Return to the proof of Theorem \ref{conection}.

Consider the bicomplex $\gm(R_*,A_*,\rho_*)$, where $\Big((R_*,A_*,\rho_*),(d^0_0,d^0_0), (R,A,\rho)\Big)$ is a simplicial resolution of $(R,A,\rho)$ in $\XAlg$ in the sense of Barr-Beck \cite{bb2} (see Proposition \ref{rem4.1.5}). For any fixed $q$ the homology of the complex $\gm_q(R_*,A_*,\rho_*)$, by Lemma \ref{lem_5.4}, is $H_p\big(\gm_q(R_*,A_*,\rho_*)\big)=0$ if $p>0$ and $H_0\big(\gm_q(R_*,A_*,\rho_*)\big)\cong \gm_q(R,A,\rho)$. Therefore $H_n\big(\Tot (\gm(R_*,A_*,\rho_*))\big)\cong H_n(\gm(R,A,\rho))$. On the other hand, there is a spectral sequence
\begin{align*}
E^1_{pq}=H_q\big(\gm(R_p,A_p,\rho_p)\big)\Longrightarrow H_{p+q}(\gm(R,A,\rho)).
\end{align*}
But by Proposition \ref{prop_2} and Proposition \ref{prop_axali} we have
$$
E^1_{pq}=\left\lbrace
\begin{matrix}
0\qquad \qquad \qquad & \text{for} & p\ge 0, \; q\neq 1\\
R_p\big/[A_p,R_p] & \text{for} & p\ge 0, \; q=1.
\end{matrix}
\right.
$$
Moreover, $E^2_{pq}\cong \xi {\mathcal HC}_p(R,A,\rho)$ for $q=1$,
$p\ge 0$. Then the degenerated spectral sequence $E^2_{pq}$ yields
the natural isomorphism
\begin{equation}\label{imp_iso}
H_{n+1}(\gm (M,R,\mu))\cong \xi {\mathcal HC}_n(M,R,\mu)\;,\quad n\ge 0\;.
\end{equation}
Thus (\ref{diag_1}) and (\ref{imp_iso}) imply the exact sequence
(\ref{mimdevr_1}). Furthermore, from (\ref{import}) one easily
deduces that for any crossed module $(R,A,\rho)$ there is a short
exact sequence of complexes
\begin{equation*}\label{mimdevr_4}
0\lra \be(R,A,\rho)\lra \gm(R,A,\rho)\lra \gm(R,A,\rho)[2]\lra 0,
\end{equation*}
where $\gm(R,A,\rho)[2]$ is the dimension shifted complex by $2$,
i.e. $\gm_n(R,A,\rho)[2]=\gm_{n-2}(R,A,\rho)$, $n\ge 0$. Now the
induced long exact homology sequence with the isomorphism
(\ref{imp_iso}) completes the rest part of the theorem.

\


\begin{thebibliography}{PTW02}
\bibitem{bb2}
M. Barr and J. Beck, {\em Homology and Standard Constructions},
Lecture Notes in Mathematics, Vol. {\bf 80}, Springer-Verlag,
Berlin/New York (1969), 245--335.
\bibitem{Bau}
H.-J. Baues, {\em ``Combinatorial homotopy and 4-dimensional
complexes''}, de Gruyter, Berlin, 1991.
\bibitem{BM}
H.-J. Baues and E. G. Minian, {\em Crossed extensions of algebras
and Hochschild cohomology}, Homology Homotopy Appl. {\bf 4 (2)}
(2002), 63--82.
\bibitem{Br}
R. Brown, {\em Groupoids and crossed objects in algebraic topology},
Homology Homotopy Appl. {\bf{1 (1)}} (1999), 1–-78.
\bibitem{CCG}
P. Carrasco, A.M. Cegarra and A.R. Grandje\'an, {\em (Co)homology of
crossed modules}, J. Pure Appl. Algebra {\bf 168} (2002), 147--176.
\bibitem{DL}
P. Dedecker and A.S.-T. Lue, {\em A nonabelian two-dimensional
cohomology for associative algebras}, Bull. Amer. Math. Soc. {\bf 72
(6)} (1966), 1044--1050.
\bibitem{DIL}
G. Donadze, N. Inassaridze and M. Ladra, {\em Derived functors and
Hopf type formulas in cyclic homology}, Bulletin de SMF (2008) (to
appear).
\bibitem{El1}
G. J. Ellis, {\em Higher dimensional crossed modules of algebras},
J. Pure Appl. Algebra {\bf 52} (1988), 277--282.
\bibitem{El5}
G. J. Ellis, {\em Homology of 2-types}, J. London Math. Soc. (2)
{\bf 46} (1992), 1--27.
\bibitem{GLP}
A.R. Grandje\'an, M. Ladra and T. Pirashvili, {\em CCG-homology of
crossed modules via classifying spaces}, J. Algebra {\bf 229}
(2000), 660--665.
\bibitem{GeWe}
S. Geller and C. Weibel, {\em Hochschild and cyclic homology are far
from being homotopy functors}, Proc. Amer. Math. Soc. {\bf 106 (1)}
(1989), 49--57.
\bibitem{InH1}
H. Inassaridze, {\em Non-abelian homological algebra and its
applications}, Kluwer Academic Publishers, Dordrecht, 1997.
\bibitem{IL}
N. Inassaridze and M. Ladra, {\em Hopf type formulas for cyclic
homology}, C. R. Math. Acad. Sci. Paris, Ser.I {\bf 346} (2008),
385--390.
\bibitem{KL}
C. Kassel and J.-L. Loday, {\em Extensions centrales d'alg\`ebres de
Lie}, Ann. Inst. Fourier (Grenoble) {\bf 32} (1982), 119--142.
\bibitem{Lo}
J.-L. Loday and D. G. Quillen, {\em Cyclic homology and the Lie
algebra homology of matrices}, Comment. Math. Helv. {\bf 59} (1984)
565--591.
\bibitem{L}
A.S.-T. Lue, {\em Non abelian cohomology of associative algebras},
Quart. J. Math. Oxford Ser. (2) {\bf 19 } (1968), 159--180.
\bibitem{Qu2}
D. Quillen, {\em Cyclic cohomology and algebra extensions}, K-Theory
{\bf 3} (1989), 205--246.
\bibitem{Wh1}
J. H. C. Whitehead, {\em Combinatorial homotopy II}, Bull. Amer.
Math. Soc. {\bf 55} (1949), 453--496.
\bibitem{Wo}
M. Wodzicki, {\em Excision in cyclic homology and in rational
algebraic K-theory}, Ann. Math. (2) {\bf 129} (1989), 591--639.

\end{thebibliography}
\end{document}